\providecommand{\U}[1]{\protect\rule{.1in}{.1in}}
\newtheorem{theorem}{Theorem}[section]
\newtheorem{corollary}[theorem]{Corollary}
\newtheorem{lemma}[theorem]{Lemma}
\newtheorem{proposition}[theorem]{Proposition}
\newtheorem{definition}[theorem]{Definition}
\numberwithin{equation}{section}
\newcommand{\dist}{\mathrm{dist}}
\newcommand{\conv}{\mathrm{conv}}
\newcommand{\tr}{\mathrm{tr}}
\newcommand{\st}{\mathrm{s.\,t.}\,\,} 
\newcommand{\proj}{\mathrm{Proj}}
\newcommand{\retr}{\mathfrak{R}}
\newcommand{\grad}{\mathrm{grad}}
\newcommand{\bK}{\mathbb{K}}
\newcommand{\bN}{\mathbb{N}}
\newcommand{\bR}{\mathbb{R}}
\newcommand{\bS}{\mathbb{S}}
\newcommand{\bT}{\mathbb{T}}
\newcommand{\cM}{\mathcal{M}}
\newcommand{\cT}{\mathcal{T}}
\newcommand{\dE}{\mathds{E}}
\newcommand{\dN}{\mathds{N}}
\newcommand{\dP}{\mathds{P}}
\newcommand{\dQ}{\mathds{Q}}
\newcommand{\dW}{\mathds{W}}
\newcommand{\sB}{\mathscr{B}}
\newcommand{\sJ}{\mathscr{J}}
\newcommand{\sP}{\mathscr{P}}
\newcommand{\sQ}{\mathscr{Q}}
\newcommand{\sM}{\mathscr{M}}
\newcommand{\rmd}{\mathrm{d}}
\newcommand{\rmD}{\mathrm{D}}
\newcommand{\Rd}{\mathbb{R}^{d}}
\newcommand{\Rdd}{\mathbb{R}^{d \times d}}
\newcommand{\Rdr}{\mathbb{R}^{d \times r}}
\newcommand{\Rrr}{\mathbb{R}^{r \times r}}
\newcommand{\Odr}{\mathcal{O}^{d,r}}
\newcommand{\bSpd}{\mathbb{S}_{+}^{d}}
\newcommand{\bSppd}{\mathbb{S}_{++}^{d}}
\newcommand{\zz}{^{\top}}
\newcommand{\inv}{^{-1}}
\newcommand{\ff}{_{\mathrm{F}}}
\newcommand{\fs}{^2_{\mathrm{F}}}
\newcommand{\last}{_{\ast}}
\newcommand{\lcirc}{_{\circ}}
\newcommand{\udiamond}{^{\diamond}}
\newcommand{\uprime}{^{\prime}}
\newcommand{\lrmR}{_{\mathrm{R}}}
\newcommand{\dkh}[1]{\left(#1\right)}
\newcommand{\hkh}[1]{\left\{#1\right\}}
\newcommand{\fkh}[1]{\left[#1\right]}
\newcommand{\jkh}[1]{\left\langle#1\right\rangle}
\newcommand{\norm}[1]{\left\|#1\right\|}
\newcommand{\abs}[1]{\left\lvert #1\right\rvert}
\definecolor{Gray}{rgb}{0.5,0.5,0.5}
\DeclareMathOperator*{\argmin}{arg\,min}
\newcommand{\Rmnum}[1]{\expandafter\@slowromancap\romannumeral #1@}
\title{Enhancing Distributional Robustness in Principal \\ Component Analysis by Wasserstein Distances\thanks{This work is supported by RGC grant JLFS/P-501/24 for the CAS AMSS-PolyU Joint Laboratory in Applied Mathematics, Hong Kong Research Grant Council project PolyU15300024, National Natural Science Foundation of China (12125108, 11991021, 11991020, 12021001), and Key Research Program of Frontier Sciences, Chinese Academy of Sciences (ZDBS-LY-7022).}}
\author{
	Lei Wang\footnote{Department of Applied Mathematics, The Hong Kong Polytechnic University, Hong Kong, China (\href{mailto:wlkings@lsec.cc.ac.cn}{wlkings@lsec.cc.ac.cn}).}
	\and
	Xin Liu\footnote{State Key Laboratory of Scientific and Engineering Computing, Academy of Mathematics and Systems Science, Chinese Academy of Sciences, and University of Chinese Academy of Sciences, Beijing, China (\href{mailto:liuxin@lsec.cc.ac.cn}{liuxin@lsec.cc.ac.cn}).}
	\and
	Xiaojun Chen\footnote{Department of Applied Mathematics, The Hong Kong Polytechnic University, Hong Kong, China (\href{mailto:maxjchen@polyu.edu.hk}{maxjchen@polyu.edu.hk}).}
}
\date{}
\begin{document}

\maketitle

\begin{abstract}
	We consider the distributionally robust optimization (DRO) model of principal component analysis (PCA) to account for uncertainty in the underlying probability distribution. 
	The resulting formulation leads to a nonsmooth constrained min-max optimization problem, where the ambiguity set captures the distributional uncertainty by the type-$2$ Wasserstein distance. 
	We prove that the inner maximization problem admits a closed-form optimal value. 
	This explicit characterization equivalently reformulates the original DRO model into a minimization problem on the Stiefel manifold with intricate nonsmooth terms, a challenging formulation beyond the reach of existing algorithms. 
	To address this issue, we devise an efficient smoothing manifold proximal gradient algorithm. 
	Our analysis establishes Riemannian gradient consistency and global convergence of our algorithm to a stationary point of the nonsmooth minimization problem. 
	We also provide the iteration complexity $O(\epsilon^{-3})$ of our algorithm to achieve an $\epsilon$-approximate stationary point. 
	Finally, numerical experiments are conducted to validate the effectiveness and scalability of our algorithm, as well as to highlight the necessity and rationality of adopting the DRO model for PCA. 
\end{abstract}



\section{Introduction}

Let $\xi \in \Rd$ be a $d$-dimensional random vector governed by a probability distribution $\dP\last$. 
In this paper, we consider the following distributionally robust optimization (DRO) model of principal component analysis (PCA),
\begin{equation} \label{opt:drspca} 
	\min_{X \in \Odr} \sup_{\dP \in \sP} \hspace{2mm}
	\dE_{\dP} \fkh{\norm{ \dkh{I_d - X X\zz} \dkh{\xi - \dE_{\dP} [\xi]} }\fs} + s (X).
\end{equation}
Here, the feasible set $\Odr := \{X \in \Rdr \mid X\zz X = I_r\}$, commonly referred to as the Stiefel manifold \cite{Absil2008optimization,Boumal2023introduction,Wang2021multipliers}, consists of all the $d \times r$ column-orthonormal matrices with $1 \leq r < d$. 
The ambiguity set $\sP$ represents a collection of distributions that could plausibly contain $\dP\last$ with high confidence. 
And $s: \Rdr \to \bR$ is a convex and Lipschitz continuous function acting as a regularizer to promote certain desired structures of solutions in $\Odr$, such as sparsity \cite{Chen2020proximal,Xiao2021exact} or nonnegativity \cite{Chen2025tight}.

In most practical scenarios, the underlying distribution $\dP\last$ is unknown and can not be captured precisely, leaving us without the essential information required to solve the PCA problem exactly. 
Although the sample-average approximation technique provides a practical model, its solutions often suffer from poor out-of-sample performances when the sample size is limited \cite{Esfahani2018data}. 
This dilemma motivates us to investigate the DRO model \eqref{opt:drspca} of PCA, which minimizes the worst case of the objective function across all distributions in $\sP$.

In the realm of DRO \cite{Goh2010distributionally,Kuhn2024distributionally}, there is a variety of ambiguity sets available, including those based on moment constraints \cite{Chen2019discrete,Delage2010distributionally,Xu2018distributionally}, divergences \cite{Van2021data,Wiesemann2013robust}, and Wasserstein distances \cite{Esfahani2018data,Gao2023distributionally}, among others. 
For a thorough and insightful exposition of these concepts, we refer interested readers to a recent survey \cite{Kuhn2024distributionally}, which offers an in-depth exploration of DRO problems.
Recently, Wasserstein DRO, where the discrepancy between probability measures is dictated by the Wasserstein distance, has garnered tremendous attentions across various domains \cite{Kuhn2019wasserstein,Kuhn2024distributionally}. 
In a similar vein, we explore the use of the Wasserstein distance in constructing the ambiguity set $\sP$ in problem \eqref{opt:drspca}. 
Let $\sQ_p$ be the space of all probability distributions $\dP$ supported on $\Rd$ with finite $p$-th moments, namely, $\dE_{\dP} (\norm{\xi}^p) = \int_{\Rd} \norm{\xi}^p \dP (\rmd \xi) < \infty$. 
Below is the definition of the Wasserstein distance defined on $\sQ_p$.

\begin{definition}[\cite{Kantorovich1958space}] \label{def:was}
	The type-$p$ Wasserstein distance $\dW_p: \sQ_p \times \sQ_p \to \bR_+$ between two probability distributions $\dP_1 \in \sQ_p$ and $\dP_2 \in \sQ_p$ is defined as
	\begin{equation*}
		\dW_p (\dP_1, \dP_2) 
		:= \inf_{\dQ \in \sJ (\dP_1, \dP_2)} \dkh{ \dE_{(\xi_1, \xi_2) \sim \dQ} \fkh{ \norm{\xi_1 - \xi_2}_p^p } }^{1/p},
	\end{equation*}
	where $\norm{\;\cdot\;}_p$ represents the $\ell_p$ norm on $\Rd$, and $\sJ (\dP_1, \dP_2)$ is the set containing all the joint distributions of $\xi_1$ and $\xi_2$ with marginals $\dP_1$ and $\dP_2$, respectively.
\end{definition}

Throughout this paper, we are primarily interested in the case where $p \in [1, 2]$, which is of significant importance both theoretically and practically \cite{Gao2023finite}. 
The corresponding Wasserstein DRO model of PCA can be formulated as
\begin{equation}
	\label{opt:drspca-was}
	\tag{\text{$\mathrm{P}_\mathrm{mM}$}}
	\begin{aligned}
		\min_{X \in \Odr} \sup_{\dP \in \sQ_p} 
		\hspace{2mm} & \dE_{\dP} \fkh{\norm{ \dkh{I_d - X X\zz} \dkh{\xi - \dE_{\dP} [\xi]} }\fs} + s (X) \\
		\st \hspace{2mm} & \dP \in \sB_{p} (\dP\lcirc, \rho) := \hkh{\dP \in \sQ_p \mid \dW_p (\dP, \dP\lcirc) \leq \rho},
	\end{aligned}
\end{equation}
where $\dP\lcirc \in \sQ_p$ is a nominal distribution conceived as being close to $\dP\last$ and $\rho \geq 0$ is a radius of the Wasserstein ball. 
It is noteworthy that the performance guarantees of the ambiguity set $\sB_{p} (\dP\lcirc, \rho)$ defined in \eqref{opt:drspca-was} can be inherited from the theoretical results in Wasserstein DRO. 
Notably, Esfahani and Kuhn \cite{Esfahani2018data} have provided an a priori estimate of the probability that the unknown true distribution $\dP\last$ resides in the Wasserstein ball. 
Blanchet et al. \cite{Blanchet2021sample,Blanchet2019robust} have developed a data-driven approach to construct a confidence region for the optimal choice of $\rho$. 
More recently, Gao \cite{Gao2023finite} has presented the finite-sample guarantees for generic Wasserstein DRO problems with $\rho = O (n^{-1/2})$ with $n \in \bN$ being the sample size, breaking the curse of dimensionality. 
These findings offer practical guidance in selecting an appropriate value of $\rho$.

By exploiting the structure of the inner maximization problem, we find that the DRO model \eqref{opt:drspca-was} is well-posed only when $p = 2$ as its optimal value becomes positive infinity for $1 \leq p < 2$.  
Moreover, the optimal value of the inner maximization problem can be computed in an explicit form for the case $p = 2$. 
This leads us to an equivalent reformulation of the DRO model \eqref{opt:drspca-was} with $p = 2$ as follows,
\begin{equation}
	\label{opt:was-eq-r}
	\tag{\text{$\mathrm{P}_\mathrm{m}$}}
	\min_{X \in \Odr} \hspace{2mm} 
	\tr \dkh{ \dkh{I_d - X X\zz} \Sigma\lcirc } + s (X) + 2 \rho \norm{ \dkh{I_d - X X\zz} \Sigma\lcirc^{1/2} }\ff
\end{equation}
where $\Sigma\lcirc = \dE_{\dP\lcirc} [ (\xi - \dE_{\dP\lcirc} [\xi]) (\xi - \dE_{\dP\lcirc} [\xi])\zz ] \in \Rdd$ is the covariance matrix of $\xi$ under the nominal distribution $\dP\lcirc$. 
It is evident that problem \eqref{opt:was-eq-r} is significantly easier to solve than problem \eqref{opt:drspca-was} in its original min-max form. 
Therefore, we turn our attention to developing efficient algorithms for tackling problem \eqref{opt:was-eq-r}.

There have been substantial advancements in nonsmooth optimization on Riemannian manifolds in recent decades, with the majority of efforts directed toward locally Lipschitz continuous functions. 
During this period, a diverse range of algorithms has emerged, including subgradient-oriented approaches \cite{Hosseini2018line,Hosseini2017riemannian,Li2021weakly}, proximal point methods \cite{Bento2017iteration,Chen2021manifold}, primal-dual frameworks \cite{Lai2014splitting,Zhang2020primal}, proximal gradient algorithms \cite{Chen2020proximal,Chen2024nonsmooth,Huang2022riemannian,Wang2022manifold}, proximal Newton methods \cite{Si2024riemannian}, infeasible approaches \cite{Hu2024constraint,Liu2024penalty,Xiao2021exact}, and so on. 
However, due to the presence of two nonsmooth terms, none of existing algorithms can be deployed to solve problem \eqref{opt:was-eq-r} efficiently. 
In particular, the last term in problem \eqref{opt:was-eq-r} poses a formidable challenge, as its proximal operator lacks a closed-form solution and entails costly computations. 
Since it can be represented by a composition of a smooth mapping and a convex function, we may resort to the proximal linear algorithm \cite{Wang2022manifold} for handling this issue. 
However, this approach requires to calculate the Jacobian of $(I_d - X X\zz) \Sigma\lcirc^{1 / 2}$, which involves the square root of $\Sigma\lcirc$. 
And the presence of another nonsmooth term renders the subproblem particularly difficult to tackle. 
The Riemannian subgradient method proposed in \cite{Li2021weakly} is capable of directly solving problem \eqref{opt:was-eq-r}. 
Nevertheless, owing to its reliance solely on subgradient information, it suffers from slow convergence with an iteration complexity of $O (\epsilon^{-4})$. 
Consequently, solving problem \eqref{opt:was-eq-r} remains a challenging endeavor.

Within the scope of this work, proximal gradient methods share the closest theoretical and methodological connection with the present study. 
This class of algorithms is initially proposed in \cite{Chen2020proximal} on the Stiefel manifold, which lays the foundation for a plethora of subsequent advancements \cite{Chen2024nonsmooth}. 
Later on, Huang and Wei \cite{Huang2022riemannian} extend this framework to general Riemannian manifolds. 
The crux of these approaches involves solving a proximal gradient subproblem on the tangent space. 
While it lacks a closed-form solution, this subproblem can be efficiently tackled by various numerical techniques, such as semi-smooth Newton methods \cite{Chen2020proximal} and fixed-point methods \cite{Liu2024penalty}.

The main contributions of this paper are summarized as follows. 

\noindent $\bullet$ 
Our investigation reveals that, the optimal value of the inner maximization problem in \eqref{opt:drspca-was} diverges to infinity when $1 \leq p < 2$, whereas for $p = 2$, it admits a closed-form expression. 
Accordingly, we shift our attention to the case $p = 2$ in the subsequent analysis. 
This explicit characterization facilitates an equivalent reformulation of the original DRO model \eqref{opt:drspca-was} into a minimization form \eqref{opt:was-eq-r}. 
A particularly intriguing discovery is that, for the classical PCA problem without regularizers, its distributionally robust counterpart is equivalent to the nominal model, uncovering an unexpected equivalence in this specific context. 

\noindent $\bullet$
We design a novel smoothing method to solve a class of nonsmooth optimization problems on the Stiefel manifold. 
Our theoretical analysis elucidates the Riemannian gradient consistency for the smoothing function and establishes the global convergence of our algorithm to a stationary point. 
Furthermore, we provide the iteration complexity of $O (\epsilon^{-3})$ required to achieve an $\epsilon$-approximate stationary point. 

\noindent $\bullet$
Last but not least, preliminary experimental results present the numerical performance of the proposed algorithm, underscoring its practical viability. 
Moreover, these results validate the necessity and rationality of adopting the DRO model for PCA, demonstrating its superiority in addressing the distributional uncertainty and enhancing the solution robustness through three real-world datasets.

The rest of this paper proceeds as follows. 
Section \ref{sec:preliminary} draws into some preliminaries of Riemannian optimization. 
In Section \ref{sec:model}, we make a profound study on the DRO model of PCA and derive its equivalent reformulation. 
Section \ref{sec:algorithm} discusses the stationarity condition and develops the smoothing algorithm. 
Its convergence analysis and iteration complexity are provided in Section \ref{sec:convergence}. 
Numerical results are presented in Section \ref{sec:experiment}. 
Finally, concluding remarks are given in Section \ref{sec:conclusion}.

\section{Preliminaries} 

\label{sec:preliminary}

In this section, we introduce the notations and concepts used throughout this paper.

\subsection{Basic Notations}

We use $\bR$ and $\bN$ to denote the sets of real and natural numbers, respectively.
And the notations $\bR_{+}$ and $\bR_{++}$ represent the sets of nonnegative and positive real numbers, respectively.
The Euclidean inner product of two matrices $Y_1, Y_2$ with the same size is defined as $\jkh{Y_1, Y_2}=\tr(Y_1\zz Y_2)$, where $\tr (B)$ stands for the trace of a square matrix $B$. 
And the notation $I_r \in \Rrr$ represents the $r \times r$ identity matrix.
The Frobenius norm of a given matrix $C$ is denoted by $\norm{C}\ff$. 
We denote by $\bSpd$ and $\bSppd$ the spaces of symmetric positive semidefinite matrices and symmetric positive definite matrices in $\Rdd$, respectively. 
The notation $B^{1/2}$ stands for the square root of a symmetric positive semidefinite matrix $B$.

\subsection{Riemannian Gradients and Clarke Subdifferentials}

Let $\cM$ be a complete submanifold embedded in $\Rdr$.
For each point $X \in \cM$, the tangent space to $\cM$ at $X$ is referred to as $\cT_{X} \cM$. 
In this paper, we consider the Riemannian metric $\jkh{\cdot, \cdot}_X$ on $\cT_{X} \cM$ that is induced from the Euclidean inner product $\jkh{\cdot, \cdot}$, i.e., $\jkh{V_1, V_2}_X = \jkh{V_1, V_2} = \tr(V_1\zz V_2)$ for any $V_1, V_2 \in \cT_{X} \cM$. 
The tangent bundle of $\cM$ is denoted by $\cT \cM = \{(X, V) \mid X \in \cM, V \in \cT_X \cM\}$, that is, the disjoint union of the tangent spaces of $\cM$. 
Additionally, we use the notation $\proj_{\cT_X \cM} (\cdot)$ to represent the orthogonal projection operator onto $\cT_X \cM$. 
In the context of the Stiefel manifold, the tangent space at $X \in \Odr$ can be described as $\cT_{X} \Odr = \{D \in \Rdr \mid X\zz D + D\zz X = 0\}$, and its orthogonal projection operator is given by $\proj_{\cT_X \Odr} (V) = V - X (X\zz V + V\zz X) / 2$ for any $V \in \Rdr$.

For a smooth function $f$, the Riemannian gradient at $X \in \cM$, denoted by $\grad\, f (X)$, is defined as the unique element of $\cT_X \cM$ satisfying 
\begin{equation*}
	\jkh{\grad\, f (X), V} = \rmD f (X) [V], \quad \forall V \in \cT_X \cM,
\end{equation*}
where $\rmD f (X) [V]$ is the directional derivative of $f$ along the direction $V$ at the point $X$.
Since $f$ is defined on an embedded submanifold in the Euclidean space, its Riemannian gradient can be computed by projecting the Euclidean gradient $\nabla f (X)$ onto the tangent space as follows,
\begin{equation*}
	\grad\, f (X) = \proj_{\cT_X \cM} \dkh{\nabla f (X)}.
\end{equation*}

For a locally Lipschitz continuous function on the manifold, the Riemannian Clarke subdifferential has been intensively studied and used in the literature \cite{Yang2014optimality,Hosseini2017riemannian,Hosseini2018line}, which is a natural extension of the Clarke subdifferential \cite{Clarke1990optimization,Rockafellar2009variational} in the Euclidean space. 
Throughout this paper, we adopt the following definition for the Riemannian Clarke subdifferential.

\begin{definition}[\cite{Hosseini2017riemannian}]
	Suppose that $f: \cM \to \bR$ is a locally Lipschitz continuous function.
	Let $\Omega\lrmR (f) = \hkh{ X \in \cM \mid \mbox{$f$ is differentiable at $X$}}$. 
	Then the Riemannian Clarke subdifferential of $f$ at $X \in \cM$ is defined as
	\begin{equation*}
		\partial\lrmR f (X) = \conv \hkh{D \in \cT_{X} \cM \mid \grad\, f (X_t) \to D, \Omega\lrmR (f) \ni X_t \to X}.
	\end{equation*}
\end{definition}

\subsection{Retractions}

In contrast to the Euclidean setting, the point $X + V$ does not lie in the manifold in general for $X \in \cM$ and $V \in \cT_{X} \cM$, due to the absence of a linear structure in $\cM$.
The interplay between $\cM$ and $\cT_{X} \cM$ is typically carried out via the exponential mappings, which are usually computationally intensive to evaluate in practice. 
As an alternative, the concept of retraction, a first-order approximation of the exponential mapping, is proposed in the literature \cite{Absil2008optimization,Boumal2023introduction} to alleviate the heavy computational burden.

\begin{definition}[\cite{Absil2008optimization}] \label{def:retr}
	A retraction on a manifold $\cM$ is a smooth mapping $\retr: \cT \cM \to \cM$, and for any $X \in \cM$, the restriction of $\retr$ to $\cT_X \cM$, denoted by $\retr_{X}$,
	satisfies the following two properties.
	\begin{enumerate}
		
		\item For any $X \in \cM$, it holds that $\retr_{X} (0_X) = X$, where $0_X$ is the zero vector in $\cT_{X} \cM$.
		
		\item The differential of $\retr_{X}$ at $0_X$, denoted by $\rmD \retr_{X} (0_X)$, is the identity map $\mathrm{id}_{\cT_{X} \cM}$ on $\cT_{X} \cM$.
		
	\end{enumerate}
\end{definition}

By leveraging the retraction $\retr_{X} (V)$, we can obtain a point by moving away from $X \in \cM$ along the direction $V \in \cT_{X} \cM$, while remaining on the manifold.
To this extent, it defines an update rule to preserve the feasibility. 
Following the proof of Lemma 2.7 in \cite{Boumal2018global}, we know that the retraction satisfies the following properties.

\begin{lemma}[\cite{Boumal2018global}] \label{le:retr}
	Let $\cM$ be a compact embedded submanifold of an Euclidean
	space. 
	There exist two constants $M_1 > 0$ and $M_2 > 0$ such that the following two relationships hold,
	\begin{equation*}
		\norm{\retr_{X} (V) - X}\ff \leq M_1 \norm{V}\ff,
	\end{equation*}
	and
	\begin{equation*}
		\norm{\retr_{X} (V) - \dkh{X + V}}\ff \leq M_2 \norm{V}\fs,
	\end{equation*}
	for any $X \in \cM$ and $V \in \cT_{X} \cM$.
\end{lemma}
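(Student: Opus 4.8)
The plan is to establish both inequalities by reducing them to the behavior of the retraction near the zero section, where it agrees with the identity map to first order, and then invoking a compactness argument to obtain uniform constants. First I would fix an arbitrary point $X \in \cM$ and work in a neighborhood. Consider the smooth map $(X, V) \mapsto \retr_X(V)$ on the tangent bundle $\cT \cM$. For the second inequality, the key observation is that the map $\phi_X(V) := \retr_X(V) - (X + V)$ satisfies $\phi_X(0_X) = 0$ by property (i) of Definition \ref{def:retr}, and moreover $\rmD \phi_X(0_X) = \rmD \retr_X(0_X) - \mathrm{id}_{\cT_X \cM} = 0$ by property (ii). Hence a second-order Taylor expansion of $\retr_X$ around $0_X$ along $V \in \cT_X \cM$ yields $\norm{\phi_X(V)}\ff \leq C(X) \norm{V}\fs$ locally, for some constant $C(X)$ depending on the second-order behavior of $\retr$ at $X$.

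Next I would address the uniformity of the constant. The coefficient $C(X)$ can be taken to depend continuously on $X$ (it is controlled by a bound on the second derivative of $\retr$ restricted to a chart around $X$), and since $\cM$ is compact, a finite subcover and a continuity argument produce a single constant $M_2 > 0$ valid for all $X \in \cM$ and all $V$ in some uniform neighborhood of the zero section. To extend from small $V$ to all $V \in \cT_X \cM$, one uses that outside a fixed ball $\norm{V}\ff \leq \delta$ the quantity $\norm{\retr_X(V) - (X + V)}\ff / \norm{V}\fs$ is bounded: indeed $\retr_X(V)$ lies in the compact manifold $\cM$, hence $\norm{\retr_X(V)}\ff$ is globally bounded, so $\norm{\retr_X(V) - (X+V)}\ff \leq \norm{\retr_X(V)}\ff + \norm{X}\ff + \norm{V}\ff$ grows at most linearly in $\norm{V}\ff$, and dividing by $\norm{V}\fs \geq \delta \norm{V}\ff$ keeps the ratio bounded. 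Enlarging $M_2$ to absorb this regime completes the second estimate.

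The first inequality then follows almost immediately from the second via the triangle inequality: $\norm{\retr_X(V) - X}\ff \leq \norm{\retr_X(V) - (X+V)}\ff + \norm{V}\ff \leq M_2 \norm{V}\fs + \norm{V}\ff$. For $\norm{V}\ff$ bounded this is dominated by a multiple of $\norm{V}\ff$, and for $\norm{V}\ff$ large the compactness of $\cM$ again gives $\norm{\retr_X(V) - X}\ff \leq \mathrm{diam}(\cM)$, which is bounded by a constant times $\norm{V}\ff$. Setting $M_1$ to be the maximum of the relevant constants yields the claim. Alternatively, one can prove the first inequality directly by the same Taylor/compactness scheme applied to $V \mapsto \retr_X(V) - X$, which vanishes at $0_X$ and hence is locally Lipschitz in $V$ with a uniform constant.

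The main obstacle I anticipate is the uniformity argument rather than the pointwise Taylor estimates: one must argue carefully that the local constants can be glued into global ones, which requires either a partition-of-unity/finite-subcover argument exploiting compactness of $\cM$, or a direct appeal to the smoothness of $\retr$ as a map on the (locally compact) tangent bundle together with compactness of $\cM$ to get uniform bounds on a tubular neighborhood of the zero section. Handling the large-$\norm{V}\ff$ regime also needs the separate compactness-of-$\cM$ observation above, since the Taylor expansion only controls small tangent vectors. This is exactly the content of the proof of Lemma 2.7 in \cite{Boumal2018global}, which we follow.
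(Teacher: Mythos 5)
Your argument is correct and is essentially the same as the one the paper relies on: the paper gives no independent proof but defers to Lemma 2.7 of \cite{Boumal2018global}, whose strategy (first-order agreement of $\retr_X$ with $V \mapsto X+V$ at $0_X$, uniform Taylor bounds on a compact neighborhood of the zero section of $\cT\cM$, and the boundedness of the compact manifold $\cM$ to handle large $\norm{V}\ff$) is exactly what you reconstruct. No gaps worth flagging.
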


There are various practical realizations of retractions on the Stiefel manifold, such as QR factorization, polar decomposition and Cayley transformation. 
We refer interested readers to \cite{Absil2008optimization,Boumal2023introduction,Gao2018new,Wen2013feasible} for more details.

\section{Model Analysis} 

\label{sec:model}

The DRO model \eqref{opt:drspca-was} of PCA is investigated in this section. 
We focus on the inner maximization problem in \eqref{opt:drspca-was} as follows,
\begin{equation} \label{opt:was-inner}
	\begin{aligned}
		\varphi (X) 
		:= {} & \sup_{\dP \in \sB_{p} (\dP\lcirc, \rho)} \dE_{\dP} \fkh{\norm{ \dkh{I_d - X X\zz} \dkh{\xi - \dE_{\dP} [\xi]} }\fs} \\
		= {} & \sup_{\dP \in \sB_{p} (\dP\lcirc, \rho) } \hkh{ \dE_{\dP} \fkh{ \tr \dkh{ \dkh{I_d - X X\zz}  \dkh{\xi \xi\zz - \dE_{\dP} \fkh{\xi} \dE_{\dP} \fkh{\xi}\zz} } } }.
	\end{aligned}
\end{equation}
The objective function of \eqref{opt:was-inner} is quadratic in the underlying distribution $\dP$ rather than linear, and hence, existing reformulations of Wasserstein DRO problems \cite{Chu2024wasserstein,Esfahani2018data,Gao2023distributionally,Zhang2024short} are not really applicable anymore. 
To navigate this challenge, we introduce an auxiliary variable $\mu \in \Rd$ and propose the following splitting formulation of \eqref{opt:was-inner},
\begin{equation} 
	\label{opt:was-inner-sp}
	\begin{aligned}
		\sup_{\mu \in \sM_{p} (\dP\lcirc, \rho)} \sup_{\dP \in \sQ_p} 
		\hspace{2mm} & \dE_{\dP} \fkh{ \tr \dkh{ \dkh{I_d - X X\zz}  \xi \xi\zz} } - \tr \dkh{ \dkh{I_d - X X\zz} \mu \mu\zz } \\
		\st \hspace{2mm} & \dW_p (\dP, \dP\lcirc) \leq \rho, 
		\quad
		\dE_{\dP} \fkh{\xi} = \mu,
	\end{aligned}
\end{equation}
where $\sM_{p} (\dP\lcirc, \rho) := \hkh{\mu = \dE_{\dP} \fkh{\xi} \mid \dW_p (\dP, \dP\lcirc) \leq \rho}$. 
The constraint $\mu \in \sM_{p} (\dP\lcirc, \rho)$ is imposed to avoid an empty feasible set and to ensure the well-posedness of problem \eqref{opt:was-inner-sp}. 
For fixed $\mu \in \sM_{p} (\dP\lcirc, \rho)$ and $X \in \Odr$, we define
\begin{equation}
	\label{eq:psi}
	\psi (\mu, X) := \sup_{\dP \in \sQ_p} \hkh{ \dE_{\dP} \fkh{ \omega_{X} (\xi) } \;\middle|\; \dW_p (\dP, \dP\lcirc) \leq \rho, \dE_{\dP} \fkh{\xi} = \mu },
\end{equation}
where $\omega_{X} (\xi) := \tr \dkh{ \dkh{I_d - X X\zz}  \xi \xi\zz}$. 
Then it holds that
\begin{equation} \label{eq:varphi-psi}
	\varphi (X) = \sup_{\mu \in \sM_{p} (\dP\lcirc, \rho)} \hkh{ \psi (\mu, X) - \tr \dkh{ \dkh{I_d - X X\zz} \mu \mu\zz } }.
\end{equation}
Based on the preceding constructions, the objective function of the outer minimization problem in the Wasserstein DRO model \eqref{opt:drspca-was} can be expressed as $\varphi (X) + s (X)$.

\subsection{Dual Representation}

In this subsection, we aim to derive the dual representation of $\psi (\mu, X)$ defined in \eqref{eq:psi}. 
This part of analysis follows the idea of Zhang et al. \cite{Zhang2024short} based on the Legendre transform \cite{Rockafellar1970convex}. 
We generalize existing results by handling an additional equality constraint $\dE_{\dP} \fkh{\xi} = \mu$. 
Although our focus is primarily on PCA, the techniques we propose can be naturally extended to more general settings.

The following lemma reveals some useful properties of the function $\bar{\psi}: \bR_{+} \to \bR \cup \{+ \infty\}$ defined as
\begin{equation*}
	\bar{\psi} (\tau) := \sup_{\dP \in \sQ_p} \hkh{ \dE_{\dP} \fkh{ \omega_{X} (\xi) } \;\middle|\; \dW_p^p (\dP, \dP\lcirc) \leq \tau, \dE_{\dP} \fkh{\xi} = \mu },
\end{equation*}
for fixed $\mu \in \sM_{p} (\dP\lcirc, \rho)$ and $X \in \Odr$.

\begin{lemma}
	\label{le:psi-prop}
	The function $\bar{\psi}$ is bounded from below, monotonically increasing, and concave on $\bR_{+}$. 
\end{lemma}

\begin{proof}
	The proof of this lemma follows along the same lines as that of \cite[Lemma 1]{Zhang2024short} and is therefore omitted for brevity. 
\end{proof}

For a function $h: \bR \to \bR \cup \{+\infty\}$, we denote by $h\udiamond: \bR \to \bR \cup \{+\infty\}$ its Legendre transform $h\udiamond (\lambda) := \sup_{\tau \in \bR} \{\lambda \tau - h (\tau)\}$. 
Then the dual representation of $\psi (\mu, X)$ can be constructed by resorting to the Legendre transform.

\begin{theorem} \label{thm:psi}
	For any $p \in [1, 2]$ and $\rho > 0$, it holds that
	\begin{equation}
		\label{eq:psi-dual}
		\psi (\mu, X) = \inf_{\lambda \geq 0, \varsigma \in \Rd} \hkh{ \lambda \rho^p + \varsigma\zz \mu + \dE_{\xi\lcirc \sim \dP\lcirc} \fkh{ \sup_{\xi \in \Rd} \bar{\omega}_{X} (\xi, \xi\lcirc) } },
	\end{equation}
	where $\bar{\omega}_{X} (\xi, \xi\lcirc) := \omega_{X} (\xi) - \lambda \norm{\xi - \xi\lcirc}_p^p - \varsigma\zz \xi$. 
\end{theorem}

\begin{proof}
	Let $\lambda \in \bR$. 
	If $\lambda < 0$, we have $(- \bar{\psi})\udiamond (- \lambda) = \sup_{\tau \geq 0} \hkh{ - \lambda \tau + \bar{\psi} (\tau) } \geq \sup_{\tau \geq 0} \hkh{ - \lambda \tau + \bar{\psi} (0) } = + \infty$. 
	Then our focus is on the case where $\lambda \geq 0$. 
	Taking the Legendre transform of $- \bar{\psi}$ leads to that
	\begin{equation*}
		\begin{aligned}
			(- \bar{\psi})\udiamond (- \lambda)
			= {} & \sup_{\tau \geq 0} \hkh{ - \lambda \tau + \bar{\psi} (\tau) } \\
			= {} & \sup_{\tau \geq 0} \sup_{\dP \in \sQ_p} \hkh{ \dE_{\dP} \fkh{ \omega_{X} (\xi) } - \lambda \tau \;\middle|\; \dW_p^p (\dP, \dP\lcirc) \leq \tau, \dE_{\dP} \fkh{\xi} = \mu } \\
			= {} & \sup_{\dP \in \sQ_p} \sup_{\tau \geq 0} \hkh{ \dE_{\dP} \fkh{ \omega_{X} (\xi) } - \lambda \tau \;\middle|\; \dW_p^p (\dP, \dP\lcirc) \leq \tau, \dE_{\dP} \fkh{\xi} = \mu } \\
			= {} & \sup_{\dP \in \sQ_p} \hkh{ \dE_{\dP} \fkh{ \omega_{X} (\xi) } - \lambda \dW_p^p (\dP, \dP\lcirc) \;\middle|\; \dE_{\dP} \fkh{\xi} = \mu }.
		\end{aligned}
	\end{equation*}
	According to Definition \ref{def:was}, it follows that
	\begin{equation*}
		\begin{aligned}
			(- \bar{\psi})\udiamond (- \lambda)
			= {} & \sup_{\dP \in \sQ_p} \hkh{ \dE_{\dP} \fkh{ \omega_{X} (\xi) } - \lambda \inf_{\dQ \in \sJ (\dP, \dP\lcirc)} \dE_{(\xi, \xi\lcirc) \sim \dQ} \fkh{ \norm{\xi - \xi\lcirc}_p^p } \;\middle|\; \dE_{\dP} \fkh{\xi} = \mu } \\
			= {} & \sup_{\dP \in \sQ_p, \dQ \in \sJ (\dP, \dP\lcirc)} \hkh{ \dE_{\dP} \fkh{ \omega_{X} (\xi) } - \lambda \dE_{(\xi, \xi\lcirc) \sim \dQ} \fkh{ \norm{\xi - \xi\lcirc}_p^p } \;\middle|\; \dE_{\dP} \fkh{\xi} = \mu } \\
			= {} & \sup_{\dQ \in \bar{\sJ} (\dP\lcirc)} \hkh{ \dE_{(\xi, \xi\lcirc) \sim \dQ} \fkh{ \omega_{X} (\xi) - \lambda \norm{\xi - \xi\lcirc}_p^p } \;\middle|\; \dE_{(\xi, \xi\lcirc) \sim \dQ} \fkh{\xi} = \mu },
		\end{aligned}
	\end{equation*}
	where $\bar{\sJ} (\dP\lcirc)$ stands for the set containing all the joint distributions of $\xi$ and $\xi\lcirc$ with second marginal $\dP\lcirc$. 
	As a direct consequence of \cite[Proposition 2.1]{Xu2018distributionally}, the Slater condition holds for the above problem, and hence, the strong duality prevails. 
	Hence, we can proceed to show that
	\begin{equation*}
		\begin{aligned}
			(- \bar{\psi})\udiamond (- \lambda)
			= {} & \sup_{\dQ \in \bar{\sJ} (\dP\lcirc)} \hkh{ \dE_{(\xi, \xi\lcirc) \sim \dQ} \fkh{ \omega_{X} (\xi) - \lambda \norm{\xi - \xi\lcirc}_p^p } \;\middle|\; \dE_{(\xi, \xi\lcirc) \sim \dQ} \fkh{\xi} = \mu } \\
			= {} & \inf_{\varsigma \in \Rd} \hkh{ \varsigma\zz \mu + \sup_{\dQ \in \bar{\sJ} (\dP\lcirc)} \dE_{(\xi, \xi\lcirc) \sim \dQ} \fkh{ \omega_{X} (\xi) - \lambda \norm{\xi - \xi\lcirc}_p^p - \varsigma\zz \xi } } \\
			= {} & \inf_{\varsigma \in \Rd} \hkh{ \varsigma\zz \mu + \sup_{\dQ \in \bar{\sJ} (\dP\lcirc)} \dE_{(\xi, \xi\lcirc) \sim \dQ} \fkh{ \bar{\omega}_{X} (\xi, \xi\lcirc) } },
		\end{aligned}
	\end{equation*}
	where $\varsigma \in \Rd$ is the Lagrangian multiplier associated with the equality constraint $\dE_{(\xi, \xi\lcirc) \sim \dQ} \fkh{\xi} = \mu$. 
	Lemma \ref{le:psi-prop} illustrates that $\bar{\psi}$ is bounded from below, monotonically increasing, and concave in $\bR_{+}$. 
	Hence, either $\bar{\psi} (\tau) < + \infty$ for all $\tau \geq 0$ or $\bar{\psi} (\tau) = + \infty$ for all $\tau > 0$. 
	In the former case, by invoking the result of \cite[Theorem 12.2]{Rockafellar1970convex}, we can obtain that
	\begin{equation}
		\label{eq:conjugate}
		\begin{aligned}
			\bar{\psi} (\tau) 
			= {} & - (- \bar{\psi})^{\diamond\diamond} (\tau)
			= - \sup_{\lambda \in \bR} \hkh{ - \lambda \tau - (- \bar{\psi})\udiamond (- \lambda) }
			= \inf_{\lambda \geq 0} \hkh{ \lambda \tau + (- \bar{\psi})\udiamond (- \lambda) } \\
			= {} & \inf_{\lambda \geq 0, \varsigma \in \Rd} \hkh{ \lambda \tau + \varsigma\zz \mu + \sup_{\dQ \in \bar{\sJ} (\dP\lcirc)} \dE_{(\xi, \xi\lcirc) \sim \dQ} \fkh{ \bar{\omega}_{X} (\xi, \xi\lcirc) } },
		\end{aligned}
	\end{equation}
	for all $\tau > 0$. 
	In the latter case, it holds that $(- \bar{\psi})\udiamond (- \lambda) = + \infty$ for all $\lambda \geq 0$, which indicates that the relationship \eqref{eq:conjugate} is also valid. 
	According to \cite[Proposition 2]{Zhang2024short}, the function $\bar{\omega}_{X}$ satisfies the interchangability principle. 
	Then it follows that
	\begin{equation*}
		\bar{\psi} (\tau)
		= \inf_{\lambda \geq 0, \varsigma \in \Rd} \hkh{ \lambda \tau + \varsigma\zz \mu + \dE_{\xi\lcirc \sim \dP\lcirc} \fkh{ \sup_{\xi \in \Rd} \bar{\omega}_{X} (\xi, \xi\lcirc) } }.
	\end{equation*}
	The proof is completed by noting that $\psi (\mu, X) = \bar{\psi} (\rho^p)$. 
\end{proof}

Theorem \ref{thm:psi} also implies a remarkable result that the optimal value of the inner maximization problem in the DRO model \eqref{opt:drspca-was} is always infinity for any $p \in [1, 2)$ and $\rho > 0$. 

\begin{corollary}
	Suppose that $p \in [1, 2)$ and $\rho > 0$. 
	Then it holds that $\varphi (X) = + \infty$ for any $X \in \Odr$. 
\end{corollary}

\begin{proof}
	For any $p \in [1, 2)$, we have
	\begin{equation*}
		\sup_{\xi \in \Rd} \bar{\omega}_X (\xi, \xi\lcirc)
		= \sup_{\xi \in \Rd} \hkh{ \tr \dkh{ \dkh{I_d - X X\zz}  \xi \xi\zz} - \lambda \norm{\xi - \xi\lcirc}_p^p - \varsigma\zz \xi }
		= + \infty,
	\end{equation*}
	which together with Theorem \ref{thm:psi} infers that $\psi (\mu, X) = + \infty$. 
	From the relationship \eqref{eq:varphi-psi}, it can be deduced that $\varphi (X) = + \infty$. 
	We complete the proof. 
\end{proof}

\subsection{Equivalent Reformulation for $p = 2$}

This subsection is devoted to deriving the equivalent reformulation \eqref{opt:was-eq-r} of the DRO model \eqref{opt:drspca-was} for the specific situation where $p = 2$. 
To this end, we establish that the optimal value $\varphi (X)$ of problem \eqref{opt:was-inner} admits a closed-form formulation.

The following lemma first shows that the supremum $\sup\, \{ \bar{\omega}_{X} (\xi, \xi\lcirc) \mid \xi \in \Rd \}$ in the dual representation \eqref{eq:psi-dual} of $\psi (\mu, X)$ can be explicitly computed. 

\begin{lemma} \label{le:sup-xi}
	Suppose that $p = 2$ and $\rho > 0$. 
	If $\lambda > 1$, it holds that
	\begin{equation*}
		\dE_{\xi\lcirc \sim \dP\lcirc} \fkh{ \sup_{\xi \in \Rd} \bar{\omega}_{X} (\xi, \xi\lcirc) }
		= \dfrac{\lambda}{\lambda - 1} \tr \dkh{ \dkh{I_d - X X\zz} \dE_{\dP\lcirc} \fkh{\xi\lcirc \xi \lcirc\zz} } 
		+ \theta_X (\lambda, \varsigma) 
		- \varsigma\zz \mu,
	\end{equation*}
	where the function $\theta_X$ is defined as
	\begin{equation*}
		\theta_X (\lambda, \varsigma)
		= \dfrac{1}{4 \lambda (\lambda - 1)} \varsigma\zz \dkh{\lambda I_d - X X\zz} \varsigma 
		+ \varsigma\zz \dkh{\mu - \dfrac{1}{\lambda - 1} \dkh{\lambda I_d - X X\zz} \dE_{\dP\lcirc} \fkh{\xi\lcirc}}.
	\end{equation*}
	Moreover, if $\lambda \in [0, 1]$, we have
	\begin{equation*}
		\dE_{\xi\lcirc \sim \dP\lcirc} \fkh{ \sup_{\xi \in \Rd} \bar{\omega}_{X} (\xi, \xi\lcirc) } = + \infty.
	\end{equation*}
\end{lemma}

\begin{proof}
	Straightforward calculations yield that
	\begin{equation*}
		\begin{aligned}
			\bar{\omega}_{X} (\xi, \xi\lcirc)
			= {} & \omega_{X} (\xi) - \lambda \norm{\xi - \xi\lcirc}_2^2 - \varsigma\zz \xi \\
			= {} & - \xi\zz \dkh{ \dkh{\lambda - 1} I_d + X X\zz} \xi
			+ \dkh{2 \lambda \xi\lcirc - \varsigma}\zz \xi
			- \lambda \xi\lcirc\zz \xi\lcirc,
		\end{aligned}
	\end{equation*}
	which is a quadratic function with respect to $\xi \in \Rd$ for fixed $\xi\lcirc \in \Rd$. 
	We move on to investigate the following two cases. 
	
	\noindent {\bf Case I}: $\lambda \in [0, 1]$. 
	Since $r < d$, the matrix $(\lambda - 1) I_d + X X\zz$ has at least one nonpositive eigenvalue $\lambda - 1 \leq 0$ associated with the nonzero eigenvector $z \in \Rd$. 
	Then for any $t \in \bR$, we have
	\begin{equation*}
		\bar{\omega}_{X} (t z, \xi\lcirc)
		= t^2 (1 - \lambda)
		+ t \dkh{2 \lambda \xi\lcirc - \varsigma}\zz z
		- \lambda \xi\lcirc\zz \xi\lcirc.
	\end{equation*}
	Since $\lambda \in [0, 1]$, it holds that
	\begin{equation*}
		\sup_{t \in \bR}\, \bar{\omega}_{X} (t z, \xi\lcirc) = + \infty,
	\end{equation*}
	which further implies that
	\begin{equation*}
		\sup_{\xi \in \Rd} \bar{\omega}_{X} (\xi, \xi\lcirc) = + \infty.
	\end{equation*}
	
	\noindent {\bf Case II}: $\lambda > 1$. 
	In this case, the matrix $(\lambda - 1) I_d + X X\zz$ is positive definite. 
	Then $\bar{\omega}_{X} (\xi, \xi\lcirc)$ is strictly concave with respect to $\xi \in \Rd$ for fixed $\xi\lcirc \in \Rd$. 
	Hence, we can proceed to show that
	\begin{equation*}
		\sup_{\xi \in \Rd} \bar{\omega}_{X} (\xi, \xi\lcirc) 
		= \dkh{\lambda \xi\lcirc - \dfrac{1}{2} \varsigma}\zz \dkh{ \dkh{\lambda - 1} I_d + X X\zz}\inv \dkh{\lambda \xi\lcirc - \dfrac{1}{2} \varsigma}
		- \lambda \xi\lcirc\zz \xi\lcirc,
	\end{equation*}
	where the supremum is attained at $\xi = ((\lambda - 1) I_d + X X\zz)\inv (\lambda \xi\lcirc - \varsigma / 2)$. 
	Moreover, according to the Sherman–Morrison–Woodbury formula \cite[page 329]{Higham2008functions}, we have
	\begin{equation*}
		\dkh{(\lambda - 1) I_d + X X\zz}\inv 
		= \dfrac{1}{\lambda - 1} I_d - \dfrac{1}{\lambda (\lambda - 1)} X X\zz.
	\end{equation*}
	Then a straightforward verification reveals that
	\begin{equation*}
		\begin{aligned}
			\sup_{\xi \in \Rd} \bar{\omega}_{X} (\xi, \xi\lcirc) 
			= {} & \dfrac{\lambda}{\lambda - 1} \tr \dkh{ \dkh{I_d - X X\zz} \xi\lcirc \xi \lcirc\zz}
			- \dfrac{1}{\lambda - 1} \varsigma\zz \dkh{\lambda I_d - X X\zz} \xi\lcirc \\
			& + \dfrac{1}{4 \lambda (\lambda - 1)} \varsigma\zz \dkh{\lambda I_d - X X\zz} \varsigma,
		\end{aligned}
	\end{equation*}
	which completes the proof. 
\end{proof}

Leveraging the result of the previous lemma, we proceed to prove that $\psi (\mu, X)$ can be expressed in an explicit form. 
Recall that $\Sigma\lcirc = \dE_{\dP\lcirc} [ (\xi - \dE_{\dP\lcirc} [\xi]) (\xi - \dE_{\dP\lcirc} [\xi])\zz ]$ is the covariance matrix of $\xi$ under the nominal distribution $\dP\lcirc$.

\begin{lemma}
	\label{le:psi-expr}
	Suppose that $p = 2$ and $\rho > 0$. 
	Then, for any $\mu \in \sM_{2} (\dP\lcirc, \rho)$ and $X \in \Odr$, it holds that
	\begin{equation*}
		\begin{aligned}
			\psi (\mu, X) = {} & \dkh{ \dkh{ \tr \dkh{ \dkh{I_d - X X\zz} \Sigma\lcirc } }^{1/2} + \dkh{\rho^2 - \norm{\mu - \dE_{\dP\lcirc} \fkh{\xi\lcirc}}_2^2}^{1/2} }^2 \\
			& + \tr \dkh{ \dkh{I_d - X X\zz} \mu \mu\zz }.
		\end{aligned}
	\end{equation*}
\end{lemma}

\begin{proof}
	Based on Lemma \ref{le:sup-xi}, we can restrict our discussion to the case where $\lambda > 1$. 
	Moreover, it follows from Theorem \ref{thm:psi} that
	\begin{equation*}
		\psi (\mu, X)
		= \inf_{\lambda > 1} \hkh{
			\lambda \rho^2 + \dfrac{\lambda}{\lambda - 1} \tr \dkh{ \dkh{I_d - X X\zz} \dE_{\dP\lcirc} \fkh{\xi\lcirc \xi \lcirc\zz} } 
			+ \inf_{\varsigma \in \Rd} \theta_X (\lambda, \varsigma)
		},
	\end{equation*}
	It is clear that $\theta_X (\lambda, \varsigma)$ is a quadratic function with respect to $\varsigma \in \Rd$ for fixed $\lambda > 1$. 
	Since $\lambda I_d - X X\zz$ is positive definite, it holds that
	\begin{equation*}
		\begin{aligned}
			\inf_{\varsigma \in \Rd} \theta_X (\lambda, \varsigma)
			= {} & - \dfrac{\lambda}{\lambda - 1} \tr \dkh{ \dkh{I_d - X X\zz} \dE_{\dP\lcirc} \fkh{\xi\lcirc} \dE_{\dP\lcirc} \fkh{\xi\lcirc}\zz } \\
			& + \tr \dkh{ \dkh{I_d - X X\zz} \mu \mu\zz }
			- \lambda \norm{\mu - \dE_{\dP\lcirc} \fkh{\xi\lcirc}}_2^2,
		\end{aligned}
	\end{equation*}
	where the infimum is attained at $\varsigma = 2 \lambda \dE_{\dP\lcirc} \fkh{\xi\lcirc} - 2 \dkh{ \dkh{\lambda - 1} I_d + X X\zz } \mu$. 
	By simple calculations, we can obtain that
	\begin{equation*}
		\begin{aligned}
			& \lambda \rho^2 + \dfrac{\lambda}{\lambda - 1} \tr \dkh{ \dkh{I_d - X X\zz} \dE_{\dP\lcirc} \fkh{\xi\lcirc \xi \lcirc\zz} } 
			+ \inf_{\varsigma \in \Rd} \theta_X (\lambda, \varsigma) \\
			= {} & \dkh{\lambda - 1} \dkh{\rho^2 - \norm{\mu - \dE_{\dP\lcirc} \fkh{\xi\lcirc}}_2^2}
			+ \dfrac{1}{\lambda - 1} \tr \dkh{ \dkh{I_d - X X\zz} \Sigma\lcirc } \\
			& + \tr \dkh{ \dkh{I_d - X X\zz} \mu \mu\zz }
			+ \rho^2 - \norm{\mu - \dE_{\dP\lcirc} \fkh{\xi\lcirc}}_2^2
			+ \tr \dkh{ \dkh{I_d - X X\zz} \Sigma\lcirc }.
		\end{aligned}
	\end{equation*}
	For any $\mu \in \sM_{2} (\dP\lcirc, \rho)$, it follows from \cite[Theorom 2.1]{Gelbrich1990formula} that $\norm{\mu - \dE_{\dP\lcirc} \fkh{\xi\lcirc}}_2 \leq \dW_2 (\dP, \dP\lcirc) \leq \rho$. 
	Then it can be readily verified that
	\begin{equation*}
		\begin{aligned}
			\psi (\mu, X)
			= {} & \inf_{\lambda > 1} \hkh{\dkh{\lambda - 1} \dkh{\rho^2 - \norm{\mu - \dE_{\dP\lcirc} \fkh{\xi\lcirc}}_2^2} + \dfrac{1}{\lambda - 1} \tr \dkh{ \dkh{I_d - X X\zz} \Sigma\lcirc } } \\
			& + \tr \dkh{ \dkh{I_d - X X\zz} \mu \mu\zz }
			+ \rho^2 - \norm{\mu - \dE_{\dP\lcirc} \fkh{\xi\lcirc}}_2^2
			+ \tr \dkh{ \dkh{I_d - X X\zz} \Sigma\lcirc } \\
			= {} & \dkh{ \dkh{ \tr \dkh{ \dkh{I_d - X X\zz} \Sigma\lcirc } }^{1/2} + \dkh{\rho^2 - \norm{\mu - \dE_{\dP\lcirc} \fkh{\xi\lcirc}}_2^2}^{1/2} }^2 \\
			& + \tr \dkh{ \dkh{I_d - X X\zz} \mu \mu\zz }.
		\end{aligned}
	\end{equation*}
	We complete the proof. 
\end{proof}

We are now in a position to derive an explicit expression for $\varphi (X)$ based on the relationship \eqref{eq:varphi-psi}, as established in the following theorem.

\begin{theorem} \label{thm:varphi}
	For any $X \in \Odr$, $\dP\lcirc \in \sQ_2$, and $\rho \geq 0$, the optimal value of problem \eqref{opt:was-inner} with $p = 2$ has the following explicit expression,
	\begin{equation}
		\label{eq:varphi}
		\varphi (X) = \dkh{ \dkh{ \tr \dkh{ \dkh{I_d - X X\zz} \Sigma\lcirc } }^{1/2} + \rho }^2.
	\end{equation}
\end{theorem}

\begin{proof}
	We first consider the case where $\rho = 0$. 
	Then the feasible region $\sB_{2} (\dP\lcirc, 0)$ of problem \eqref{opt:was-inner} collapses to the singleton $\{\dP\lcirc\}$. 
	As a result, we have
	\begin{equation*}
		\varphi (X) = \tr \dkh{ \dkh{I_d - X X\zz} \Sigma\lcirc },
	\end{equation*}
	which indicates that the relationship $\eqref{eq:varphi}$ holds for $\rho = 0$. 
	
	Next, our focus is on the case where $\rho > 0$. 
	As a direct consequence of Lemma \ref{le:psi-expr}, we can proceed to show that
	\begin{equation*}
		\begin{aligned}
			\varphi (X)
			= {} & \sup_{\mu \in \sM_{2} (\dP\lcirc, \rho)}
			\hkh{ \psi (\mu, X) - \tr \dkh{ \dkh{I_d - X X\zz} \mu \mu\zz } } \\
			= {} & \sup_{\mu \in \sM_{2} (\dP\lcirc, \rho)}
			\dkh{ \dkh{ \tr \dkh{ \dkh{I_d - X X\zz} \Sigma\lcirc } }^{1/2} + \dkh{\rho^2 - \norm{\mu - \dE_{\dP\lcirc} \fkh{\xi\lcirc}}_2^2}^{1/2} }^2 \\
			= {} & \dkh{ \dkh{ \tr \dkh{ \dkh{I_d - X X\zz} \Sigma\lcirc } }^{1/2} + \rho }^2,
		\end{aligned}
	\end{equation*}
	where the supremum is attained at $\mu = \dE_{\dP\lcirc} \fkh{\xi\lcirc}$. 
	The proof is completed. 
\end{proof}

By expanding the square term in \eqref{eq:varphi}, it can be obtained that
\begin{equation*}
	\varphi (X)
	= \tr \dkh{ \dkh{I_d - X X\zz} \Sigma\lcirc }
	+ 2 \rho \dkh{ \tr \dkh{ \dkh{I_d - X X\zz} \Sigma\lcirc } }^{1/2}
	+ \rho^2.
\end{equation*}
It is crucial to recognize that the function $X \mapsto \dkh{ \tr \dkh{ \dkh{I_d - X X\zz} \Sigma\lcirc } }^{1/2}$ fails to be locally Lipschitz continuous in $\Rdr$ due to the presence of square roots. 
Fortunately, for any $X \in \Odr$, we have
\begin{equation*} 
	\begin{aligned}
		\dkh{\tr \dkh{ \dkh{I_d - X X\zz} \Sigma\lcirc } }^{1/2}
		= {} & \dkh{\tr \dkh{ \Sigma\lcirc^{1/2} \dkh{I_d - X X\zz} \dkh{I_d - X X\zz} \Sigma\lcirc^{1/2} } }^{1/2} \\
		= {} & \norm{\dkh{I_d - X X\zz} \Sigma\lcirc^{1/2}}\ff.
	\end{aligned}
\end{equation*}
Based on this representation and Theorem \ref{thm:varphi}, the DRO model \eqref{opt:drspca-was} with $p = 2$ can be equivalently reformulated as problem \eqref{opt:was-eq-r}.

We end this section by demonstrating that, the solutions of classical PCA without regularizers inherently possess robustness against data perturbations, as characterized by the type-$2$ Wasserstein distance. 

\begin{corollary} 
	Suppose that $p = 2$ and $s (X) = 0$ for all $X \in \Odr$. 
	Then for any $\Sigma\lcirc \in \bS_{+}^d$ and $\rho \geq 0$, the global minimizers of problem \eqref{opt:drspca-was} coincide with those of the following nominal model of PCA,
	\begin{equation*}
		\min_{X \in \Odr} \hspace{2mm} \tr \dkh{ \dkh{I_d - X X\zz} \Sigma\lcirc }.
	\end{equation*}
\end{corollary}

\begin{proof}
	According to Theorem \ref{thm:varphi}, we know that the DRO model \eqref{opt:drspca-was} is equivalent to problem \eqref{opt:was-eq-r}. 
	Then the nonnegativity of $\tr ( (I_d - X X\zz) \Sigma\lcirc )$ and $\rho$ results in that
	\begin{equation*}
		\argmin_{X \in \Odr} \hkh{ \dkh{ \dkh{ \tr \dkh{ \dkh{I_d - X X\zz} \Sigma\lcirc } }^{1/2} + \rho }^2 }
		= \argmin_{X \in \Odr} \hkh{ \tr \dkh{ \dkh{I_d - X X\zz} \Sigma\lcirc } },
	\end{equation*}
	which completes the proof. 
\end{proof}

\section{Algorithm Design} 

\label{sec:algorithm}

The purpose of this section is to devise an efficient algorithm to solve the equivalent reformulation \eqref{opt:was-eq-r} of the DRO model \eqref{opt:drspca-was} with $p = 2$. 
We consider a broader class of  nonsmooth optimization problems of the following form,
\begin{equation} \label{opt:main} 
	\min_{X \in \Odr} \hspace{2mm} 
	f (X) := u(X) + s (X) + w (X),
\end{equation}
where $u$, $s$, and $w$ are appropriate functions satisfying the following conditions. 
\begin{enumerate}
	
	\item The function $u: \Rdr \to \bR$ is continuously differentiable and its Euclidean gradient $\nabla u$ is Lipschitz continuous with the corresponding Lipschitz constant $L_u > 0$. 
	
	\item The function $s: \Rdr \to \bR$ is convex and Lipschitz continuous with the corresponding Lipschitz constant $L_s > 0$. 
	
	\item The function $w: \Rdr \to \bR$ is of the form $w (X) = 2 \rho \| (I_d - X X\zz) \Sigma\lcirc^{1/2} \|\ff$ with $\Sigma\lcirc \in \bS_{+}^d$ and $\rho > 0$. 
	
\end{enumerate}

It is evident that model \eqref{opt:was-eq-r} is a specific instance of problem \eqref{opt:main} by identifying $u (X) = \tr ( (I_d - X X\zz) \Sigma\lcirc )$. 
As a direct consequence of the continuity of $f$ over the compact manifold $\Odr$, there always exists an optimal solution of problem \eqref{opt:main}.

\subsection{Stationarity Condition}

In this subsection, we establish the stationarity condition for local minimizers of problem \eqref{opt:main}. 
According to the discussions in \cite{Hosseini2017riemannian,Yang2014optimality}, a necessary condition that $f$ achieves a local minimum at $X$ on $\Odr$ is that
\begin{equation*}
	0 \in \partial\lrmR f (X).
\end{equation*}
Since $u$ is smooth, $s$ is convex, and $w$ is a composition of a smooth mapping and a convex function, they are all weakly convex and hence regular \cite{Drusvyatskiy2019efficiency}. 
As a result, the objective function $f = u + s + w$ is also regular \cite{Clarke1990optimization}. 
Then it follows from \cite[Theorem 5.3]{Yang2014optimality} that
\begin{equation*}
	\partial\lrmR f (X) 
	= \grad\, u (X) + \partial\lrmR s (X) + \partial\lrmR w (X),
\end{equation*}
for any $X \in \Odr$.

Based on the above discussions, the stationarity condition of the nonsmooth problem \eqref{opt:main} can be stated as follows.

\begin{definition}
	A point $X\last \in \Odr$ is called a stationary point of problem \eqref{opt:main} if the following condition holds,
	\begin{equation*}
		0 \in \grad\, u (X\last) + \partial\lrmR s (X\last) + \partial\lrmR w (X\last).
	\end{equation*}
\end{definition}

\subsection{Smoothing Function}

To address the challenges posed by the nonsmooth term $w$, we propose to leverage the smoothing approximation technique \cite{Chen2012smoothing}. 
Specifically, we construct the smoothing function of $w$ as follows,
\begin{equation} \label{eq:fun-tw}
	\tilde{w} (X, \mu) = 
	\left\{
	\begin{aligned}
		& w (X), 
		&& \mbox{if~} w (X) \geq \mu \rho, \\
		& \dfrac{w^2 (X)}{2 \mu \rho} + \dfrac{\mu \rho}{2}, 
		&& \mbox{if~} w (X) < \mu \rho,
	\end{aligned}
	\right.
\end{equation}
where $\mu > 0$ is a smoothing parameter. 
Interested readers can refer to \cite{Chen2012smoothing} for more examples of smoothing functions. 

When it is clear from the context, the Euclidean and Riemannian gradients of $\tilde{w} (X, \mu)$ with respect to $X$ are simply denoted by $\nabla \tilde{w} (X, \mu)$ and $\grad\, \tilde{w} (X, \mu)$, respectively. The following proposition reveals that the smoothing function $\tilde{w}$ enjoys some favorable properties. 

\begin{proposition} \label{prop:smoothing}
	The function $\tilde{w}: \Rdr \times \bR_{++} \to \bR$ constructed in \eqref{eq:fun-tw} satisfies the  following conditions.
	\begin{enumerate}
		
		\item For any $\mu > 0$, $\tilde{w} (\cdot, \mu)$ is continuously differentiable over $\Rdr$.
		
		\item For any $X \in \Rdr$, it holds that
		\begin{equation*}
			\lim_{X\uprime \to X, \, \mu \downarrow 0} \tilde{w} (X\uprime, \mu) = w (X).
		\end{equation*}
		
		\item For any $X \in \Rdr$ and $\mu > 0$, we have
		\begin{equation*}
			w (X) 
			\leq \tilde{w} (X, \mu) 
			\leq w (X) + \dfrac{\mu \rho}{2}.
		\end{equation*}
		
		\item There exists a constant $M_w > 0$ such that $\norm{\nabla \tilde{w} (X, \mu)}\ff \leq M_w$ for any $X \in \Odr$ and $\mu > 0$.
		
		\item There exists a constant $L_w > 0$ such that, for any $\mu > 0$, $\nabla \tilde{w} (\cdot, \mu)$ is Lipschitz continuous over $\Odr$ with the corresponding Lipschitz constant $L_w \mu^{-1}$.
		
	\end{enumerate}
\end{proposition}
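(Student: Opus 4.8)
\textbf{Proof proposal for Proposition \ref{prop:smoothing}.}

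The plan is to write $\tilde{w}(X,\mu) = g(w(X),\mu)$, where for $t \ge 0$ and $\mu > 0$ we set $g(t,\mu) = t$ when $t \ge (\mu/2)^{1/2}$ and $g(t,\mu) = (t^4/\mu + \mu/4)^{1/2}$ when $t < (\mu/2)^{1/2}$. Since $w(X) = \norm{(I_d - XX\zz)(\Sigma\ucirc)^{1/2}}\ff$ is a composition of the smooth map $X \mapsto (I_d - XX\zz)(\Sigma\ucirc)^{1/2}$ with the convex (but at the origin nonsmooth) Frobenius norm, the key is to show that $g(\cdot,\mu)$ is $C^1$ on $[0,\infty)$ with $g'(0,\mu)=0$, which kills the nondifferentiability of the norm at points of $\cS(\Sigma\ucirc)$. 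For item (i), I would check the matching of $g$ and $\partial_t g$ at the junction $t_\mu := (\mu/2)^{1/2}$: there $t^4/\mu + \mu/4 = \mu/4 + \mu/4 = \mu/2 = t_\mu^2$, so $g$ is continuous; and $\partial_t g = 2t^3/(\mu \, g)$ on the lower branch, which at $t = t_\mu$ equals $2 t_\mu^3/(\mu \cdot t_\mu) = 2t_\mu^2/\mu = 1$, matching the derivative of the upper branch. Since $\partial_t g(0,\mu) = 0$ and the lower branch is a smooth positive function of $t$ (the radicand $t^4/\mu + \mu/4 \ge \mu/4 > 0$), $g(\cdot,\mu)$ is $C^1$; composing with the smooth matrix map gives continuous differentiability of $\tilde{w}(\cdot,\mu)$ on $\Rdr$, with $\nabla \tilde{w}(X,\mu) = \partial_t g(w(X),\mu)\, \nabla\big(\norm{(I_d-XX\zz)(\Sigma\ucirc)^{1/2}}\ff\big)$ on the set where $w(X) > 0$, and $\nabla \tilde{w}(X,\mu)=0$ where $w(X)=0$ (here $\partial_t g = 0$ absorbs the blow-up of $\nabla\norm{\cdot}\ff$).

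For item (ii), I would note $g(t,\mu) \to t$ as $(t,\mu) \to (t_0, 0^+)$: for $t_0 > 0$ eventually $t \ge (\mu/2)^{1/2}$ so $g(t,\mu)=t \to t_0$; for $t_0 = 0$, $0 \le g(t,\mu) \le (t^4/\mu + \mu/4)^{1/2}$ on the lower branch and $g(t,\mu)=t$ on the upper branch, and a short case split (comparing $t$ with $(\mu/2)^{1/2}$) bounds $g(t,\mu) \le \max\{t, (t^4/\mu+\mu/4)^{1/2}\}$; when $t < (\mu/2)^{1/2}$ we get $t^4/\mu < t^2/2$, hence $g(t,\mu)^2 < t^2/2 + \mu/4 \to 0$. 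Continuity of $w$ then yields the stated limit. For item (iii), on the upper branch $\tilde{w}=w$ so both inequalities are trivial; on the lower branch $\tilde{w}^2 = w^4/\mu + \mu/4$, and since $w < (\mu/2)^{1/2}$ implies $w^4/\mu \le w^2 \cdot (1/2)$... more simply $\tilde{w}^2 - w^2 = w^4/\mu - w^2 + \mu/4 = (w^2/\sqrt{\mu} - \sqrt{\mu}/2)^2 \ge 0$ giving $\tilde{w} \ge w$, and $\tilde{w}^2 \le w^2 + \mu/4$ follows from $w^4/\mu \le w^2$ on this branch, whence $\tilde{w} \le (w^2 + \mu/4)^{1/2} \le w + (\mu/4)^{1/2}$.

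For items (iv) and (v) I would exploit that $\Odr$ is compact. Write $\nabla\tilde{w}(X,\mu) = \partial_t g(w(X),\mu)\, h(X)$ where $h(X) := \nabla(\norm{(I_d-XX\zz)(\Sigma\ucirc)^{1/2}}\ff)$ wherever $w(X)\neq 0$. One checks $w^2(X) = \tr((I_d-XX\zz)\Sigma\ucirc)$ is smooth on $\Odr$ with gradient bounded by a constant, so $h(X) = \nabla w^2(X)/(2w(X))$ and the product $\partial_t g(w,\mu)\, h(X)$ stays bounded because $\partial_t g(t,\mu)/t = 2t^2/(\mu g(t,\mu)) \le 2t^2/(\mu \cdot (\mu/4)^{1/2})$... rather, the cleaner route is: on the lower branch $|\partial_t g(t,\mu)| = 2t^3/(\mu g) \le 2t^3/(\mu\cdot t^2/\sqrt{\mu}) $ is itself $O(1)$ since $t \le (\mu/2)^{1/2}$, and on the upper branch $|\partial_t g| = 1$; meanwhile $\nabla w^2$ is bounded on $\Odr$ and $\partial_t g(w,\mu) h(X) = \partial_t g(w,\mu)\nabla w^2(X)/(2w(X))$ with the ratio $\partial_t g(w,\mu)/w$ bounded by $\sup_{t\ge 0} 2t^2/(\mu g(t,\mu)) \le 1/\text{(something)}$ — in any case a finite bound independent of $\mu$ on the upper branch and a bound of order $1$ on the lower branch. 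Assembling these gives a uniform $M_w$. For (v), I would show the scalar function $\tilde{w}(X,\mu)$ has Hessian (in $X$, for fixed $\mu$) bounded by $C\mu^{-1}$ on the compact $\Odr$: differentiating $\nabla\tilde{w} = \partial_t g(w,\mu)\nabla w^2/(2w)$ once more, the worst term comes from $\partial_{tt} g$ on the lower branch, where $\partial_{tt} g = O(t/\mu) + O(t^5/(\mu^2 g^3))$, and using $t^2/\mu \le 1/2$, $g \ge (\mu/4)^{1/2}$ one gets $\partial_{tt} g = O(\mu^{-1/2})$; combined with $t$-factors from the chain rule and the smooth bounded quantities $\nabla w^2, \nabla^2 w^2$ on $\Odr$, all contributions are $O(\mu^{-1})$, yielding the Lipschitz constant $L_w\mu^{-1}$ for $\nabla\tilde{w}(\cdot,\mu)$. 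The main obstacle is (iv)--(v): one must carefully track how the factors $w(X)$ in the denominator (which vanish on $\cS(\Sigma\ucirc)$) cancel against $\partial_t g$, and verify the blow-up in the estimates is exactly of order $\mu^{-1}$ and no worse; compactness of $\Odr$ handles the dependence on $X$ but the $\mu$-dependence needs the explicit two-branch structure of $g$.
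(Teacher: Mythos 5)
The paper gives no argument to compare against (its proof is declared elementary and omitted), so I judge your proposal on its own terms. Your treatment of items (i)--(iii) is correct: the branch values and first derivatives of $g(t,\mu)$ match at $t_\mu=(\mu/2)^{1/2}$, the identity $\tilde{w}^2-w^2=\bigl(w^2/\sqrt{\mu}-\sqrt{\mu}/2\bigr)^2\ge 0$ and the bound $w^4/\mu\le w^2$ on the lower branch give (iii), and (ii) follows; the smoothness through points of $\cS(\Sigma\ucirc)$ is cleanest if you note that on the lower branch $\tilde{w}(\cdot,\mu)=\bigl((w^2)^2/\mu+\mu/4\bigr)^{1/2}$ is a smooth function of the polynomial $w^2(\cdot)$, which makes your "$\partial_t g(0,\mu)=0$ absorbs the blow-up" remark rigorous.

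Items (iv)--(v), however, contain a genuine gap: the facts you actually assemble --- boundedness of $\nabla(w^2)$ on $\Odr$, $\abs{\partial_t g}\le\sqrt{2}$, and an attempted bound on $\partial_t g(w,\mu)/w$ --- do not give a $\mu$-independent $M_w$. Indeed $\partial_t g(t,\mu)/t\le 2t^2/(\mu g)\le 2\mu^{-1/2}$ on the lower branch and $1/t\le(2/\mu)^{1/2}$ on the upper branch, so this route yields only $O(\mu^{-1/2})$; and your claim of "a finite bound independent of $\mu$ on the upper branch" is unsupported, since there $\partial_t g\equiv 1$ while $w$ can be as small as $(\mu/2)^{1/2}$. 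The cancellation you flag as the main obstacle sits in the numerator, not in $\partial_t g$: for $X\in\Odr$ the Euclidean gradient of the ambient $w^2(X)=\norm{(I_d-XX\zz)(\Sigma\ucirc)^{1/2}}\fs$ is $-2\bigl(P\Sigma\ucirc+\Sigma\ucirc P\bigr)X=-2(I_d-XX\zz)\Sigma\ucirc X$ with $P=I_d-XX\zz$, and factoring $(\Sigma\ucirc)^{1/2}$ through gives $\norm{(I_d-XX\zz)\Sigma\ucirc X}\ff\le w(X)\,\norm{(\Sigma\ucirc)^{1/2}}_2$, i.e.\ $\norm{\nabla(w^2)(X)}\ff\le C\,w(X)$ on the manifold (the same factorization the paper uses in the proof of Theorem \ref{thm:consistency} to bound $\norm{\grad\,w(X_t)}\ff$). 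With this, $\norm{\nabla\tilde{w}(X,\mu)}\ff=\abs{\partial_t g(w,\mu)}\,\norm{\nabla(w^2)(X)}\ff/(2w)\le \sqrt{2}\,C/2$ uniformly in $\mu$, which is (iv), and the same inequality is what keeps the $\nabla w\otimes\nabla w$ and $\nabla^2 w$ contributions in your Hessian estimate for (v) at the claimed order; without it your worst term $\partial_{tt}g\,\nabla w\otimes\nabla w$ is not even bounded near $\cS(\Sigma\ucirc)$. Beware also that your identification $w^2(X)=\tr\dkh{(I_d-XX\zz)\Sigma\ucirc}$ is valid only for values on $\Odr$: differentiating that trace expression gives $-2\Sigma\ucirc X$, which does not vanish on $\cS(\Sigma\ucirc)$ and would not deliver (iv); the Euclidean gradient must be taken of the ambient Frobenius-norm definition. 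Finally, for (v), since $\Odr$ is nonconvex a Hessian bound on $\Odr$ alone does not directly give Lipschitzness of $\nabla\tilde{w}(\cdot,\mu)$ over $\Odr$; bound the (piecewise) Hessian on a convex neighborhood and integrate along ambient segments, noting $\tilde{w}(\cdot,\mu)$ is $C^1$ but only piecewise $C^2$ across the junction $w=(\mu/2)^{1/2}$.
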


\begin{proof}
	The proof can be easily given, which is omitted here. 
\end{proof}

We adopt the smoothing function $\tilde{w}$ given in \eqref{eq:fun-tw} for two key reasons. 
First, the evaluation of both function values and gradients for $\tilde{w}$ circumvents the need to compute $\Sigma\lcirc^{1 / 2}$. 
Second, this particular smoothing function satisfies Riemannian gradient consistency, which will be rigorously demonstrated in the next subsection. 
This property is crucial for guaranteeing the global convergence of our algorithm.

\subsection{Riemannian Subdifferential}

The algorithm proposed in this paper is based on the smoothing approximation technique. 
Thus, it is natural that the convergence result is closely tied to the specific smoothing function employed. 
Below is the definition of the Riemannian subdifferential associated with the smoothing function, which serves as a fundamental concept in our analysis.

\begin{definition}
	The Riemannian subdifferential of $w$ associated with the smoothing function $\tilde{w}$ at $X \in \Odr$ is defined as
	\begin{equation*}
		\partial\lrmR |_{\tilde{w}} w (X) = \hkh{G \in \cT_X \Odr \mid \grad\, \tilde{w} (X_t, \mu_t) \to G, \Odr \ni X_t \to X, \mu_t \downarrow 0}. 
	\end{equation*}
\end{definition}

The following theorem establishes the Riemannian gradient consistency of the smoothing function $\tilde{w}$. 
By bridging two subdifferentials, this property plays a pivotal role in showing the global convergence of the proposed algorithm to a stationary point of problem \eqref{opt:main}. 

\begin{theorem} \label{thm:consistency}
	For the smoothing function $\tilde{w}$ constructed in \eqref{eq:fun-tw}, it holds that
	\begin{equation*}
		\partial\lrmR |_{\tilde{w}} w (X) \subseteq \partial\lrmR w (X),
	\end{equation*}
	for any $X \in \Odr$.
\end{theorem}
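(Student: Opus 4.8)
The plan is to fix $X \in \Odr$ and show that any $G \in \partial\lrmR|_{\tilde w} w(X)$ lies in $\partial\lrmR w(X)$, by carefully splitting according to whether $X$ is a point of differentiability of $w$, i.e. whether $X \notin \cS(\Sigma\ucirc)$ or $X \in \cS(\Sigma\ucirc)$ (recall from the remark after \eqref{opt:main} that $\cS(\Sigma\ucirc)$ is exactly the nondifferentiability set of $w$). First I would observe, using Proposition \ref{prop:smoothing}(i) together with the explicit formula \eqref{eq:fun-tw}, that wherever $w(X') > 0$ the smoothing function $\tilde w(\cdot,\mu)$ agrees with $w$ for all sufficiently small $\mu$ (since eventually $w(X') \ge (\mu/2)^{1/2}$), hence $\grad\,\tilde w(X',\mu) = \grad\, w(X')$ there; this reduces the analysis near a generic point to the ordinary definition of $\partial\lrmR w$.

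For the case $X \notin \cS(\Sigma\ucirc)$: here $w$ is smooth at $X$ (indeed $w(X) > 0$ and the map $X \mapsto (\Sigma\ucirc)^{1/2}(I_d - XX\zz)$ composed with $\|\cdot\|\ff$ is differentiable wherever the argument is nonzero), so $\partial\lrmR w(X) = \{\grad\, w(X)\}$. Take any sequence $(X_t,\mu_t)$ with $X_t \to X$, $\mu_t \downarrow 0$, and $\grad\,\tilde w(X_t,\mu_t) \to G$. Since $w(X) > 0$ and $w$ is continuous, $w(X_t) > (\mu_t/2)^{1/2}$ for $t$ large, so $\grad\,\tilde w(X_t,\mu_t) = \grad\, w(X_t) \to \grad\, w(X)$ by continuity of $\grad\, w$ near $X$; hence $G = \grad\, w(X) \in \partial\lrmR w(X)$.

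For the case $X \in \cS(\Sigma\ucirc)$: now $w(X) = 0$. I would split the approximating sequence $(X_t,\mu_t)$ into two subsequences. On indices where $w(X_t) \ge (\mu_t/2)^{1/2}$ we again have $\grad\,\tilde w(X_t,\mu_t) = \grad\, w(X_t)$ with $X_t \to X$ through points of differentiability of $w$ (note $X_t \notin \cS(\Sigma\ucirc)$ there since $w(X_t) > 0$), so any limit of these lands in $\conv\{D \in \cT_X\Odr \mid \grad\, w(X_s) \to D,\ \Omega\lrmR(w)\ni X_s \to X\} = \partial\lrmR w(X)$ directly from the definition. The delicate indices are those with $w(X_t) < (\mu_t/2)^{1/2}$, where $\tilde w(X_t,\mu_t) = (w^4(X_t)/\mu_t + \mu_t/4)^{1/2}$; I would compute $\nabla\tilde w(X_t,\mu_t)$ by the chain rule as $\frac{w^3(X_t)}{\mu_t(w^4(X_t)/\mu_t+\mu_t/4)^{1/2}}\,\nabla w^2(X_t)^{1/2}$-type expression — more precisely writing $\nabla w(X_t)$ when $w(X_t)>0$ and handling $w(X_t)=0$ separately — and show the scalar coefficient multiplying (a unit direction of) $\nabla w(X_t)$ stays in $[0,1]$, in fact is at most $\sqrt{2}\,w(X_t)/\mu_t^{1/2} \cdot$ (bounded factor), which goes to $0$ faster than any fixed rate only if... here is the subtlety. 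The cleaner route: show that in this regime $\|\nabla\tilde w(X_t,\mu_t)\ff\|$ is bounded by the Lipschitz-type bound $M_w$ from Proposition \ref{prop:smoothing}(iv) and that $\nabla\tilde w(X_t,\mu_t)$ is a scalar multiple $\theta_t \in [0,1]$ of $\nabla w(X_t)$ (pointing in the same direction), so any subsequential limit $G$ is a limit of $\theta_t\,\grad\, w(X_t)$ with $X_t$ ranging over differentiability points converging to $X$; since $0 \in \partial\lrmR w(X)$ (as $w \ge 0 = w(X)$, so $X$ is a global min of $w$, whence $0 \in \partial\lrmR w(X)$, and actually $\partial\lrmR w(X)$ is the convex hull of directional limits including $0$ via the regime $w(X_t)=0$) and each $\grad\, w(X_t)$ limits into $\partial\lrmR w(X)$, convexity of $\partial\lrmR w(X)$ gives $\theta_t\grad\, w(X_t) = \theta_t\grad\, w(X_t) + (1-\theta_t)\cdot 0 \to G \in \partial\lrmR w(X)$.

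\textbf{Main obstacle.} The crux is the regime $w(X_t) < (\mu_t/2)^{1/2}$ near a nondifferentiability point: one must verify (i) that $\nabla\tilde w(X_t,\mu_t)$ is genuinely a nonnegative scalar multiple, with the scalar in $[0,1]$, of a Euclidean (sub)gradient of $w$ at $X_t$ — including a clean argument at the points where $w(X_t) = 0$ exactly, where one should check directly from \eqref{eq:fun-tw} that $\nabla\tilde w(X_t,\mu_t) = 0$ since $\tilde w(\cdot,\mu_t)$ has a local minimum there — and (ii) that the convex-hull structure of $\partial\lrmR w(X)$ absorbs these limits, for which the key facts are $0 \in \partial\lrmR w(X)$ and convexity. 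I expect steps (i) and the bookkeeping of which sequences land where to be the only nontrivial parts; everything else is continuity of $\grad\, w$ away from $\cS(\Sigma\ucirc)$ and the definitions.
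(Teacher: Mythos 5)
Your route is essentially the paper's: identify the three regimes $w(X_t)\ge(\mu_t/2)^{1/2}$ (where $\grad\,\tilde w(X_t,\mu_t)=\grad\,w(X_t)$ and the definition of $\partial\lrmR w(X)$ applies directly), $0<w(X_t)<(\mu_t/2)^{1/2}$ (where $\grad\,\tilde w(X_t,\mu_t)=\tau_t\,\grad\,w(X_t)$ with a scalar $\tau_t\in(0,1)$), and $w(X_t)=0$ (where the smoothing gradient vanishes); then use that $w(X)=0$ makes $X$ a global minimizer, so $0\in\partial\lrmR w(X)$, and conclude via convexity of the Clarke subdifferential that a limit of the form $\tau H+(1-\tau)\,0$ stays in $\partial\lrmR w(X)$. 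The paper organizes the cases by finiteness of the index sets $\{t\mid w(X_t)<(\mu_t/2)^{1/2}\}$ and $\{t\mid w(X_t)=0\}$ rather than by whether $X\in\cS(\Sigma\ucirc)$, but that difference is cosmetic.

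There is, however, one step you assert without justification, and it is precisely the step your cited tools do not cover. In the regime $0<w(X_t)<(\mu_t/2)^{1/2}$ you say ``each $\grad\,w(X_t)$ limits into $\partial\lrmR w(X)$'' and then split $\tau_t\,\grad\,w(X_t)$; but to extract such a limit you first need the sequence $\{\grad\,w(X_t)\}$ to be bounded, and this does \emph{not} follow from Proposition \ref{prop:smoothing}(iv): that bounds $\nabla\tilde w(X_t,\mu_t)=\tau_t\nabla w(X_t)$, which says nothing about $\nabla w(X_t)$ when $\tau_t\to 0$ (and $\tau_t$ does tend to $0$ when $w(X_t)/\mu_t^{1/2}\to 0$). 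A priori the bound is delicate because $\grad\,w(X_t)$ has the form $(I_d-X_tX_t\zz)\Sigma\ucirc X_t$ divided by $w(X_t)$, whose denominator vanishes as $X_t$ approaches $\cS(\Sigma\ucirc)$. The paper closes this by the explicit computation $\norm{\grad\,w(X_t)}\ff=\norm{(I_d-X_tX_t\zz)\Sigma\ucirc X_t}\ff\,/\,\norm{(I_d-X_tX_t\zz)(\Sigma\ucirc)^{1/2}}\ff\le\tr(\Sigma\ucirc)$, which works because the numerator factors through $(I_d-X_tX_t\zz)(\Sigma\ucirc)^{1/2}$, i.e.\ it vanishes at least as fast as the denominator. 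Without some such uniform bound, when $\tau_t\to 0$ you cannot rule out that $\tau_t\,\grad\,w(X_t)$ converges to a point outside $\partial\lrmR w(X)$, so your convex-combination argument cannot be launched. Once this bound is supplied, the rest of your proposal (including the observation that $\grad\,\tilde w(X_t,\mu_t)=0$ at points with $w(X_t)=0$, and the verification $\tau_t\in(0,1)$ from $w^4/\mu+\mu/4\ge w^2$ and $w^2<\mu/2$) matches the paper's proof.
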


\begin{proof}
	We fix an arbitrary $X \in \Odr$. 
	Let $G \in \partial\lrmR |_{\tilde{w}} w (X)$.
	Then there exist two sequences $\{X_t\} \subseteq \Odr$ and $\{\mu_t\} \subseteq \bR_{+}$ with $X_t \to X$ and $\mu_t \downarrow 0$ as $t \to \infty$ such that
	\begin{equation*}
		G = \lim_{\Odr \ni X_t \to X, \, \mu_t \downarrow 0} \grad\, \tilde{w} (X_t, \mu_t).
	\end{equation*}
	For convenience, we define the index set $\bT := \{t \in \bN \mid w (X_t) < \mu_t \rho\}$.
	
	If $\bT$ is a finite set, 
	there exists $\bar{t} \in \bN$ such that
	\begin{equation*}
		w (X_t) \geq \mu_t \rho > 0,
	\end{equation*}
	for any $t \geq \bar{t}$.
	Hence, the function $w$ is continuously differentiable near $X_t$ and we have
	\begin{equation*}
		\grad\, \tilde{w} (X_t, \mu_t)
		= \grad\, w (X_t),
	\end{equation*}
	for all $t \geq \bar{t}$. 
	Then it can be obtained that
	\begin{equation*}
		G = \lim_{\Odr \ni X_t \to X, \, \mu_t \downarrow 0} \grad\, \tilde{w} (X_t, \mu_t)
		= \lim_{\Odr \ni X_t \to X} \grad\, w (X_t),
	\end{equation*}
	which indicates that $G \in \partial\lrmR w (X)$.
	
	Next, we consider the case that $\bT$ is an infinite set.
	Then it is clear that $w (X) = 0$.
	Thus, the function $w$ attains the global minimum at $X$, which further implies that $0 \in \partial\lrmR w (X)$.
	Let $\bT\uprime := \{t \in \bN \mid w (X_t) = 0\}$ be a subset of $\bT$.
	If $\bT\uprime$ is also an infinite set, we can obtain that
	\begin{equation*}
		G = \lim_{\Odr \ni X_t \to X, \, \mu_t \downarrow 0, \, t \in \bT\uprime} \grad\, \tilde{w} (X_t, \mu_t).
	\end{equation*}
	Straightforward calculations yield that $\grad\, \tilde{w} (X_t, \mu_t) = 0 \in \partial\lrmR w (X_t)$ for any $t \in \bT\uprime$.
	Hence, the above relationship indicates that $G = 0 \in \partial\lrmR w (X)$.
	Now we assume that $\bT\uprime$ is a finite set.
	Then there exists $\hat{t} \in \bN$ such that
	\begin{equation*}
		0 < w (X_t) < \mu_t \rho,
	\end{equation*}
	for any $t \geq \hat{t}$.
	Moreover, the function $w$ is continuously differentiable near $X_t$ and we have
	\begin{equation} \label{eq:grad_h}
		\grad\, \tilde{w} (X_t, \mu_t) 
		= \tau_t \grad\, w (X_t),
	\end{equation}
	where $\tau_t$ is a constant defined by
	\begin{equation*} 
		\tau_t = \dfrac{w (X_t)}{\mu_t \rho} \in (0, 1).
	\end{equation*}
	A straightforward verification reveals that
	\begin{equation*}
		\norm{\grad\, w (X_t)}\ff
		= 2 \rho \dfrac{\norm{(I_d - X_t X_t\zz) \Sigma\lcirc X_t}\ff}{\norm{(I_d - X_t X_t\zz) \Sigma\lcirc^{1/2}}\ff} \\
		\leq 2 \rho \norm{\Sigma\lcirc^{1/2}}\ff,
	\end{equation*}
	for any $X_t \in \Odr$ satisfying $w (X_t) \neq 0$. 
	Hence, the sequence $\{\grad\, w (X_t)\}_{t \geq \hat{t}}$ is bounded. 
	By passing to a subsequence if necessary, we may assume without loss of generality that
	\begin{equation*}
		H = \lim_{\Odr \ni X_t \to X} \grad\, w (X_t).
	\end{equation*}
	According to the definition of Riemannian Clarke subdifferentials, it holds that $H \in \partial\lrmR w (X)$. 
	Since $\tau_t \in (0, 1)$ for any $t \geq \hat{t}$, we can assume without loss of generality that $\lim_{t \to \infty} \tau_t = \tau$ for a constant $\tau \in [0, 1]$. 
	Consequently, it follows from the relationship \eqref{eq:grad_h} and the fact $0 \in \partial\lrmR w (X)$ that
	\begin{equation*}
		\begin{aligned}
			G = {} & \lim_{\Odr \ni X_t \to X, \, \mu_t \downarrow 0} \grad\, \tilde{w} (X_t, \mu_t)
			= \lim_{\Odr \ni X_t \to X, \, \mu_t \downarrow 0} \tau_t \grad\, w (X_t) \\
			= {} & \tau H
			= \tau H + (1 - \tau) 0
			\in \conv \hkh{\partial\lrmR w (X)}
			= \partial\lrmR w (X).
		\end{aligned}
	\end{equation*}
	The proof is completed.
\end{proof}

\subsection{Algorithm Development}

Based on the smoothing function $\tilde{w}$, we can obtain the following approximation of the objective function $f$ in problem \eqref{opt:main},
\begin{equation*}
	\tilde{f} (X, \mu) := \tilde{g} (X, \mu) + s (X),
\end{equation*}
where $\tilde{g}$ is given by
\begin{equation*}
	\tilde{g} (X, \mu) := u (X) + \tilde{w} (X, \mu).
\end{equation*}
It is clear that the function $\tilde{f} (X, \mu)$ exhibits a composite structure. 
In particular, the first term $\tilde{g} (X, \mu)$ is smooth for fixed $\mu > 0$, whereas the second term $s (X)$ is possibly nonsmooth. 
This inherent structure naturally lends itself to the framework of the proximal gradient method on the manifold to minimize $\tilde{f} (\cdot, \mu)$ over $\Odr$.

Specifically, we intend to solve the following subproblem to find the descent direction $V_k \in \cT_{X_k} \Odr$ at the $k$-th iteration,
\begin{equation} \label{eq:subp}
	V_k := \argmin_{V \in \cT_{X_k} \Odr} 
	h_k (V) := \jkh{\nabla \tilde{g} (X_k, \mu_k), V} + \dfrac{1}{2 \mu_k} \norm{V}\fs + s (X_k + V),
\end{equation}
where $X_k \in \Odr$ and $\mu_k > 0$ are the current iterate and smoothing parameter, respectively.
The above subproblem involves minimizing a strongly convex function on the tangent space. 
Although $V_k$ serves as a descent direction, the updated iterate $X_k + \alpha_k V_k$, for an arbitrary stepsize $\alpha_k > 0$, does not necessarily remain on $\Odr$. 
Consequently, we then perform a retraction to bring it back to $\Odr$.

Algorithm \ref{alg:SMPG} outlines the complete procedure of our approach for solving problem \eqref{opt:main}, which is named {\it smoothing manifold proximal gradient} and abbreviated to SMPG.
It is noteworthy that SMPG involves an Armijo line search procedure \eqref{eq:ls} to determine the stepsize. 
As we will show later, this backtracking line search procedure is well-defined and guaranteed to terminate in a finite number of steps.

\begin{algorithm2e}[ht]
	\caption{Smoothing manifold proximal gradient (SMPG).} 
	\label{alg:SMPG}
	
	\KwIn{$X_{0} \in \Odr$, $\mu_0 > 0$, $\bar{\mu} \in [0, \mu_0]$, $\theta \in (0, 1)$, and $\beta \in (0, 1)$.}
	
	\For{$k = 0, 1, 2, \dotsc$}{
		
		Solve subproblem \eqref{eq:subp} to obtain $V_k$.
		
		\If{$\norm{V_k}\ff \leq \bar{\mu}^{2}$ and $\mu_k \leq \bar{\mu}$}{
			
			Return $X_k$.
			
		}
		
		\Else{
			
			Find $\alpha_k := \beta^{m_k}$ such that $m_k$ is the smallest integer satisfying
			\begin{equation} \label{eq:ls}
				\tilde{f} (\retr_{X_k} (\beta^{m_k} V_k), \mu_k) 
				\leq \tilde{f} (X_k, \mu_k) - \dfrac{\beta^{m_k}}{2 \mu_k} \norm{V_k}\fs.
			\end{equation}
			
			Update $X_{k + 1} := \retr_{X_k} (\alpha_k V_k)$. 
			
			Set
			\begin{equation*}
				\mu_{k + 1} := 
				\left\{
				\begin{aligned}
					& \mu_{k}, 
					&& \mbox{if~} \norm{V_k}\ff > \mu_k^{2}, \\
					& \theta \mu_{k}, 
					&& \mbox{if~} \norm{V_k}\ff \leq \mu_k^{2}.
				\end{aligned}
				\right.
			\end{equation*}
			
		}
		
	}
	
	\KwOut{$X_k$.}
	
\end{algorithm2e}

\section{Convergence Analysis}

\label{sec:convergence}

This section delves into a comprehensive convergence analysis of the proposed algorithm.
Specifically, we establish that any accumulation point of the sequence generated by Algorithm \ref{alg:SMPG} is a stationary point.
And the iteration complexity of Algorithm \ref{alg:SMPG} is provided to attain an approximate stationary point.

\subsection{Descent Property}

In the following lemma, we first prove that $V_k$, obtained by solving subproblem \eqref{eq:subp}, serves as a descent direction in the tangent space $\cT_{X_k} \Odr$ at the current iterate $X_k \in \Odr$.

\begin{lemma} \label{le:des-h}
	Suppose that $\{X_k\}$ is the sequence generated by Algorithm \ref{alg:SMPG}. 
	Then it holds that
	\begin{equation*}
		h_k (0) - h_k (\alpha V_k)
		\geq \dfrac{\alpha (2- \alpha)}{2 \mu_k} \norm{V_k}\fs,
	\end{equation*}
	for any $\alpha \in [0, 1]$.
\end{lemma}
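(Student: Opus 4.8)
The plan is to exploit the strong convexity of the subproblem objective $h_k$ together with the fact that $V_k$ is its exact minimizer over the tangent space $\cT_{X_k}\Odr$. First I would record the key structural facts about $h_k$: writing $h_k(V) = \jkh{\nabla\tilde{g}(X_k,\mu_k),V} + \frac{1}{2\mu_k}\norm{V}\fs + s(X_k+V)$, the first two terms form a quadratic that is $\mu_k^{-1}$-strongly convex, and $s$ is convex; hence $h_k$ is $\mu_k^{-1}$-strongly convex on the linear subspace $\cT_{X_k}\Odr$. Since $0 \in \cT_{X_k}\Odr$ and $V_k = \argmin_{V\in\cT_{X_k}\Odr} h_k(V)$, convexity of the feasible set lets me use that for any $\alpha\in[0,1]$ the point $\alpha V_k = (1-\alpha)\cdot 0 + \alpha\cdot V_k$ lies in $\cT_{X_k}\Odr$.

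The core computation is to bound $h_k(\alpha V_k)$ from above using the convexity/strong-convexity inequality. Concretely, along the segment from $0$ to $V_k$, strong convexity of $h_k$ gives
\begin{equation*}
	h_k(\alpha V_k) \leq (1-\alpha) h_k(0) + \alpha h_k(V_k) - \dfrac{\alpha(1-\alpha)}{2\mu_k}\norm{V_k}\fs,
\end{equation*}
where I have used that the strongly convex modulus applies to the whole of $h_k$ and that the ``distance'' term between the endpoints $0$ and $V_k$ is exactly $\norm{V_k}\fs$. Rearranging yields
\begin{equation*}
	h_k(0) - h_k(\alpha V_k) \geq \alpha\bigl(h_k(0) - h_k(V_k)\bigr) + \dfrac{\alpha(1-\alpha)}{2\mu_k}\norm{V_k}\fs.
\end{equation*}
It then remains to lower-bound $h_k(0) - h_k(V_k)$. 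Because $V_k$ minimizes the $\mu_k^{-1}$-strongly convex function $h_k$, the standard inequality $h_k(0) \geq h_k(V_k) + \frac{1}{2\mu_k}\norm{V_k}\fs$ holds (this is strong convexity evaluated at the minimizer, with the subgradient term vanishing). Substituting $h_k(0) - h_k(V_k) \geq \frac{1}{2\mu_k}\norm{V_k}\fs$ into the previous display gives
\begin{equation*}
	h_k(0) - h_k(\alpha V_k) \geq \dfrac{\alpha}{2\mu_k}\norm{V_k}\fs + \dfrac{\alpha(1-\alpha)}{2\mu_k}\norm{V_k}\fs = \dfrac{\alpha(2-\alpha)}{2\mu_k}\norm{V_k}\fs,
\end{equation*}
which is exactly the claimed bound.

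The only mild subtlety — the step I expect to need the most care — is being precise about \emph{where} the strong convexity modulus lives: $h_k$ is strongly convex as a function on the Euclidean subspace $\cT_{X_k}\Odr$ (with its induced inner product), not on all of $\Rdr$, but since the minimization in \eqref{eq:subp} is over that subspace and the subspace is convex, both the two-point interpolation inequality and the minimizer inequality are legitimate. One should also note that $h_k(0) = s(X_k)$ and confirm the quadratic part contributes $0$ at $V=0$, which is immediate. No appeal to retractions or manifold curvature is needed here; this lemma is purely a convex-analysis statement on a fixed tangent space, so the argument is short once the strong convexity bookkeeping is set up correctly.
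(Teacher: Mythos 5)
Your proposal is correct and rests on the same two ingredients as the paper's proof: the strong convexity of $h_k$ with modulus $\mu_k^{-1}$ together with the projected optimality condition at $V_k$, which yields exactly the minimizer inequality $h_k(0) \geq h_k(V_k) + \tfrac{1}{2\mu_k}\norm{V_k}\fs$ that the paper also derives. The only difference is packaging: you compare $h_k(\alpha V_k)$ with the endpoints via the two-point strong-convexity interpolation inequality, whereas the paper expands the quadratic part explicitly and invokes convexity of $s$ along the segment, and both computations yield the identical bound $\tfrac{\alpha(2-\alpha)}{2\mu_k}\norm{V_k}\fs$.
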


\begin{proof}
	Since $h_k (V)$ is strongly convex with the modulus $\mu_k^{-1}$, we have
	\begin{equation} \label{eq:h_k}
		h_k (\bar{V}) 
		\geq h_k (V)
		+ \jkh{\partial h_k (V), \bar{V} - V}
		+ \dfrac{1}{2 \mu_k} \norm{\bar{V} - V}\fs,
	\end{equation}
	for any $V, \bar{V} \in \Rdr$.
	In particular, for $V, \bar{V} \in \cT_{X_k} \Odr$, it holds that
	\begin{equation*}
		\jkh{\partial h_k (V), \bar{V} - V}
		= \jkh{\proj_{\cT_{X_k} \Odr} \dkh{\partial h_k (V)}, \bar{V} - V}.
	\end{equation*}
	Moreover, it follows from the optimality condition of subproblem \eqref{eq:subp} that $0 \in \proj_{\cT_{X_k} \Odr} \dkh{\partial h_k (V_k)}$.
	Taking $V = V_k$ and $\bar{V} = 0$ in \eqref{eq:h_k} yields that
	\begin{equation*}
		h_k (0) \geq h_k (V_k) + \dfrac{1}{2 \mu_k} \norm{V_k}\fs,
	\end{equation*}
	which, after a suitable rearrangement, can be equivalently written as
	\begin{equation*}
		s (X_k) 
		\geq \jkh{\nabla \tilde{g} (X_k, \mu_k), V_k}
		+ \dfrac{1}{\mu_k} \norm{V_k}\fs
		+ s (X_k + V_k).
	\end{equation*}
	According to the convexity of $s$, we have
	\begin{equation*}
		\begin{aligned}
			s (X_k + \alpha V_k) - s (X_k)
			= {} & s ( (1 - \alpha) X_k + \alpha (X_k + V_k) ) - s (X_k) \\
			\leq {} & \alpha \dkh{s (X_k + V_k) - s (X_k)}.
		\end{aligned}
	\end{equation*}
	Collecting the above two relationships together results in that
	\begin{equation*}
		\begin{aligned}
			h_k (\alpha V_k) - h_k (0)
			= {} & \alpha \jkh{\nabla \tilde{g} (X_k, \mu_k), V_k}
			+ \dfrac{\alpha^2}{2 \mu_k} \norm{V_k}\fs
			+ s (X_k + \alpha V_k) - s (X_k) \\
			\leq {} & \alpha \dkh{
				\jkh{\nabla \tilde{g} (X_k, \mu_k), V_k}
				+ \dfrac{\alpha}{2 \mu_k} \norm{V_k}\fs
				+ s (X_k + V_k) - s (X_k)} \\
			\leq {} & \dfrac{\alpha (\alpha - 2)}{2 \mu_k} \norm{V_k}\fs,
		\end{aligned}
	\end{equation*}
	which completes the proof.
\end{proof}

Based on Lemma \ref{le:des-h}, we can proceed to show that the line search procedure in Algorithm \ref{alg:SMPG} is well-defined, ensuring that the stepsize $\alpha_k$ can be determined in a finite number of trials.

\begin{lemma} \label{le:des-f}
	Let $\{X_k\}$ be the sequence generated by Algorithm \ref{alg:SMPG}.
	Then there exists a constant $\bar{\alpha} \in (0, 1]$ such that
	\begin{equation*}
		\tilde{f} (X_k, \mu_k) - \tilde{f} (\retr_{X_k} (\alpha V_k), \mu_k)
		\geq \dfrac{\alpha}{2 \mu_k} \norm{V_k}\fs,
	\end{equation*}
	for any $\alpha \in (0, \bar{\alpha})$.
\end{lemma}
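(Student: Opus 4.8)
The plan is to show that the Armijo condition \eqref{eq:ls} holds for all sufficiently small stepsizes, by combining the descent estimate from Lemma \ref{le:des-h} on the tangent space with a second-order control of the retraction error. First I would invoke the smoothness of $\tilde{g}(\cdot,\mu_k)$: since $\nabla u$ is Lipschitz with constant $L_u$ and $\nabla\tilde{w}(\cdot,\mu_k)$ is Lipschitz with constant $L_w\mu_k^{-1}$ on $\Odr$ (Proposition \ref{prop:smoothing}), the function $\tilde{g}(\cdot,\mu_k)$ has an $(L_u + L_w\mu_k^{-1})$-Lipschitz gradient, so the standard descent lemma gives, for any $Y,Z$ in a neighbourhood containing the retraction curve,
\begin{equation*}
	\tilde{g}(Z,\mu_k) \leq \tilde{g}(Y,\mu_k) + \jkh{\nabla\tilde{g}(Y,\mu_k), Z - Y} + \frac{L_u + L_w\mu_k^{-1}}{2}\norm{Z - Y}\fs.
\end{equation*}
I would apply this with $Y = X_k$ and $Z = \retr_{X_k}(\alpha V_k)$, then write $Z - X_k = \alpha V_k + (Z - X_k - \alpha V_k)$ and use Lemma \ref{le:retr} to bound $\norm{Z - X_k - \alpha V_k}\ff \leq M_2\alpha^2\norm{V_k}\fs$ and $\norm{Z - X_k}\ff \leq M_1\alpha\norm{V_k}\ff$. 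The inner product term $\jkh{\nabla\tilde{g}(X_k,\mu_k), Z - X_k}$ then splits into $\alpha\jkh{\nabla\tilde{g}(X_k,\mu_k), V_k}$ plus a remainder bounded by $\norm{\nabla\tilde{g}(X_k,\mu_k)}\ff\, M_2\alpha^2\norm{V_k}\fs$, and I would bound $\norm{\nabla\tilde{g}(X_k,\mu_k)}\ff$ uniformly using $M_w$ from Proposition \ref{prop:smoothing} and the compactness of $\Odr$ (so $\norm{\nabla u}$ is bounded too).

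Next I would handle the nonsmooth term $s$. Adding $s(\retr_{X_k}(\alpha V_k))$ to both sides and using the Lipschitz continuity of $s$ together with Lemma \ref{le:retr}, I have $s(\retr_{X_k}(\alpha V_k)) \leq s(X_k + \alpha V_k) + L_s M_2\alpha^2\norm{V_k}\fs$. Now the combination $\alpha\jkh{\nabla\tilde{g}(X_k,\mu_k), V_k} + s(X_k + \alpha V_k) - s(X_k) + \frac{\alpha^2}{2\mu_k}\norm{V_k}\fs$ is exactly $h_k(\alpha V_k) - h_k(0)$, which by Lemma \ref{le:des-h} is at most $-\frac{\alpha(2-\alpha)}{2\mu_k}\norm{V_k}\fs \leq -\frac{\alpha}{2\mu_k}\norm{V_k}\fs$ for $\alpha\in(0,1]$. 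Collecting everything, I obtain
\begin{equation*}
	\tilde{f}(\retr_{X_k}(\alpha V_k),\mu_k) - \tilde{f}(X_k,\mu_k) \leq -\frac{\alpha}{2\mu_k}\norm{V_k}\fs + C\alpha^2\norm{V_k}\fs,
\end{equation*}
where $C$ absorbs all the $O(\alpha^2)$ contributions: the Lipschitz-gradient term, the retraction remainders, and the $L_s M_2$ term. The one subtlety is that $C$ contains a factor $\mu_k^{-1}$ coming from the Lipschitz constant of $\nabla\tilde{w}$; I would therefore write $C = \tilde{C}\mu_k^{-1}$ with $\tilde{C}$ independent of $k$, so that the decrease is $\frac{\alpha}{\mu_k}(\frac{1}{2} - \tilde{C}\alpha - \dots)\norm{V_k}\fs$ — actually I should double-check whether the $L_s M_2$ and $M_2\norm{\nabla u}$ pieces need rescaling, but since $\mu_k \leq \mu_0$ is bounded, one can always factor out $\mu_k^{-1}$ and enlarge $\tilde{C}$ accordingly, so that the Armijo inequality $\tilde{f}(\retr_{X_k}(\alpha V_k),\mu_k) \leq \tilde{f}(X_k,\mu_k) - \frac{\alpha}{2\mu_k}\norm{V_k}\fs$ holds whenever $\tilde{C}\alpha \leq \frac{1}{4}$, say.

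Finally I would set $\bar{\alpha} := \min\{1, 1/(4\tilde{C})\}$, which is a constant independent of $k$ (since $\tilde{C}$ does not depend on $X_k$, $\mu_k$, or $V_k$), and conclude that the stated inequality holds for all $\alpha\in(0,\bar{\alpha})$; in particular the backtracking loop in Algorithm \ref{alg:SMPG} terminates once $\beta^{m_k} < \bar{\alpha}$, i.e., after finitely many trials. The main obstacle I anticipate is bookkeeping the $\mu_k$-dependence carefully: because $\nabla\tilde{w}(\cdot,\mu)$ has Lipschitz constant growing like $\mu^{-1}$, a naive application of the descent lemma would give a stepsize threshold $\bar\alpha$ shrinking with $\mu_k$, which would be useless for the later complexity analysis. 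The fix — and the point to get right — is that the term $\frac{1}{2\mu_k}\norm{V_k}\fs$ built into $h_k$ already carries the matching $\mu_k^{-1}$, so after dividing through by $\mu_k^{-1}$ the threshold becomes $\mu_k$-free; one just has to make sure every $O(\alpha^2)$ error term is likewise expressed with a $\mu_k^{-1}$ prefactor (trivially possible since $\mu_k \leq \mu_0$), which keeps $\bar\alpha$ a genuine constant.
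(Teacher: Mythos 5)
Your overall route is the same as the paper's: the descent lemma for $\tilde g(\cdot,\mu_k)$ with Lipschitz constant $L_u+L_w\mu_k^{-1}$, the two retraction estimates of Lemma \ref{le:retr} to control $\retr_{X_k}(\alpha V_k)-(X_k+\alpha V_k)$ and $\retr_{X_k}(\alpha V_k)-X_k$, the Lipschitz continuity of $s$, Lemma \ref{le:des-h} to supply the negative term, and the use of $\mu_k\leq\mu_0$ to write every error constant with a $\mu_k^{-1}$ prefactor so that $\bar\alpha$ is $\mu_k$-free; that last point is indeed the crux, and you identify it correctly.

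There is, however, a quantitative slip at the final step which leaves the stated inequality unproven as written. You weaken Lemma \ref{le:des-h} to $h_k(\alpha V_k)-h_k(0)\leq-\frac{\alpha}{2\mu_k}\norm{V_k}\fs$ and then collect $\tilde f(\retr_{X_k}(\alpha V_k),\mu_k)-\tilde f(X_k,\mu_k)\leq-\frac{\alpha}{2\mu_k}\norm{V_k}\fs+C\alpha^{2}\norm{V_k}\fs$ with $C>0$. From this bound no choice of $\alpha>0$ gives a decrease with the factor exactly $\frac{1}{2\mu_k}$: with $\tilde C\alpha\leq\frac14$ you only obtain $-\frac{\alpha}{4\mu_k}\norm{V_k}\fs$, i.e.\ an Armijo condition with constant $1/4$, which matches neither the lemma nor the line-search test \eqref{eq:ls}, whose factor is exactly $\frac{1}{2\mu_k}$. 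The remedy is to use Lemma \ref{le:des-h} at full strength: the quantity produced by the descent-lemma expansion is $\alpha\jkh{\nabla\tilde g(X_k,\mu_k),V_k}+s(X_k+\alpha V_k)-s(X_k)=\bigl(h_k(\alpha V_k)-h_k(0)\bigr)-\frac{\alpha^{2}}{2\mu_k}\norm{V_k}\fs\leq-\frac{\alpha}{\mu_k}\norm{V_k}\fs$, so the linear part contributes $-\frac{\alpha}{\mu_k}$, and half of this surplus absorbs all the $O(\alpha^{2}/\mu_k)$ remainders once $\alpha\leq\bar\alpha:=\min\{1,\,1/(C+2\mu_0M_2L_s)\}$, leaving exactly the required $-\frac{\alpha}{2\mu_k}\norm{V_k}\fs$. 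This is precisely how the paper closes the argument; with that correction, the rest of your proposal coincides with the paper's proof.
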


\begin{proof}
	According to the Lipschitz continuity of $\nabla \tilde{g} (X, \mu)$ for fixed $\mu > 0$, it follows that
	\begin{equation*}
		\begin{aligned}
			\tilde{g} (\retr_{X_k} (\alpha V_k), \mu_k)
			\leq {} & \tilde{g} (X_k, \mu_k)
			+ \jkh{\nabla \tilde{g} (X_k, \mu_k), \retr_{X_k} (\alpha V_k) - X_k} \\
			& + \dfrac{L_u \mu_k + L_w}{2 \mu_k} \norm{\retr_{X_k} (\alpha V_k) - X_k}\fs \\
			\leq {} & \tilde{g} (X_k, \mu_k)
			+ \jkh{\nabla \tilde{g} (X_k, \mu_k), \retr_{X_k} (\alpha V_k) - (X_k + \alpha V_k)} \\
			& + \alpha \jkh{\nabla \tilde{g} (X_k, \mu_k), V_k}
			+ \dfrac{L_u \mu_0 + L_w}{2 \mu_k} \norm{\retr_{X_k} (\alpha V_k) - X_k}\fs,
		\end{aligned}
	\end{equation*}
	where the last inequality holds due to the fact that $\mu_k \leq \mu_0$.
	Since $\nabla u$ is continuous over the compact manifold $\Odr$, there exists a constant $M_u > 0$ such that $\norm{\nabla u (X)}\ff \leq M_u$ for any $X \in \Odr$.
	Hence, it holds that
	\begin{equation*}
		\norm{\nabla \tilde{g} (X_k, \mu_k)}\ff 
		\leq \norm{\nabla u (X_k)}\ff + \norm{\nabla \tilde{w} (X_k, \mu_k)}\ff
		\leq \dfrac{(M_u + M_w) \mu_0}{\mu_k}.
	\end{equation*}
	By invoking Lemma \ref{le:retr}, we can obtain that
	\begin{equation*}
		\begin{aligned}
			& \jkh{\nabla \tilde{g} (X_k, \mu_k), \retr_{X_k} (\alpha V_k) - (X_k + \alpha V_k)} \\
			\leq {} & \norm{\nabla \tilde{g} (X_k, \mu_k)}\ff \norm{\retr_{X_k} (\alpha V_k) - (X_k + \alpha V_k)}\ff \\
			\leq {} & \dfrac{\alpha^2 \mu_0 M_2 (M_u + M_w)}{\mu_k} \norm{V_k}\fs,
		\end{aligned}
	\end{equation*}
	and
	\begin{equation*}
		\norm{\retr_{X_k} (\alpha V_k) - X_k}\fs
		\leq \alpha ^2 M_1^2 \norm{V_k}\fs.
	\end{equation*}
	Let $C = 2 \mu_0 M_2 (M_u + M_w) + M_1^2 (L_u \mu_0 + L_w) > 0$ be a constant. 
	Then it can be readily verified that
	\begin{equation*}
		\tilde{g} (\retr_{X_k} (\alpha V_k), \mu_k)
		- \tilde{g} (X_k, \mu_k)
		\leq \alpha \jkh{\nabla \tilde{g} (X_k, \mu_k), V_k}
		+ \dfrac{\alpha^2 C}{2 \mu_k} \norm{V_k}\fs.
	\end{equation*}
	Moreover, according to the Lipschitz continuity of $s$, we have
	\begin{equation*}
		\begin{aligned}
			s (\retr_{X_k} (\alpha V_k)) - s (X_k)
			= {} & s (\retr_{X_k} (\alpha V_k)) - s (X_k + \alpha V_k) + s (X_k + \alpha V_k) - s (X_k) \\
			\leq {} & L_s \norm{\retr_{X_k} (\alpha V_k)) - (X_k + \alpha V_k)}\ff 
			+ s (X_k + \alpha V_k) - s (X_k) \\
			\leq {} & \dfrac{\alpha^2 \mu_0 M_2 L_s}{\mu_k} \norm{V_k}\fs 
			+ s (X_k + \alpha V_k) - s (X_k),
		\end{aligned}
	\end{equation*}
	where the last inequality results from Lemma \ref{le:retr} and the fact that $\mu_k \leq \mu_0$.
	Collecting the above two inequalities together yields that
	\begin{equation} \label{eq:des-f}
		\begin{aligned}
			& \tilde{f} (\retr_{X_k} (\alpha V_k), \mu_k) - \tilde{f} (X_k, \mu_k) \\
			= {} & \tilde{g} (\retr_{X_k} (\alpha V_k), \mu_k)
			- \tilde{g} (X_k, \mu_k)
			+ s (\retr_{X_k} (\alpha V_k)) 
			- s (X_k) \\
			\leq {} & \alpha \jkh{\nabla \tilde{g} (X_k, \mu_k), V_k}
			+ s (X_k + \alpha V_k) - s (X_k)
			+ \dfrac{\alpha^2 (C + 2 \mu_0 M_2 L_s)}{2 \mu_k} \norm{V_k}\fs.
		\end{aligned}
	\end{equation}
	As a direct consequence of Lemma \ref{le:des-h}, we can proceed to show that
	\begin{equation*}
		\alpha \jkh{\nabla \tilde{g} (X_k, \mu_k), V_k} 
		+ s (X_k + \alpha V_k) - s (X_k)
		\leq - \dfrac{\alpha}{\mu_k} \norm{V_k}\fs,
	\end{equation*}
	which together with the relationship \eqref{eq:des-f} implies that
	\begin{equation*}
		\tilde{f} (\retr_{X_k} (\alpha V_k), \mu_k) - \tilde{f} (X_k, \mu_k)
		\leq - \dfrac{\alpha}{2 \mu_k} \dkh{2 - \alpha \dkh{C + 2 \mu_0 M_2 L_s}} \norm{V_k}\fs.
	\end{equation*}
	Let $\bar{\alpha} = \min\{1, 1 / (C + 2 \mu_0 M_2 L_s)\} \in (0, 1]$. 
	Then for any $\alpha \in (0, \bar{\alpha})$, we can conclude that
	\begin{equation*}
		\tilde{f} (\retr_{X_k} (\alpha V_k), \mu_k) - \tilde{f} (X_k, \mu_k)
		\leq - \dfrac{\alpha}{2 \mu_k} \norm{V_k}\fs,
	\end{equation*}
	as desired. 
	The proof is completed.
\end{proof}

Lemma \ref{le:des-f} guarantees that the line search procedure in \eqref{eq:ls} terminates in at most $\lceil \log_{\beta} \bar{\alpha} \rceil$ steps, which is independent of the smoothing parameter $\mu_k$.
Here, the notation $\lceil m \rceil$ represents the smallest integer greater than or equal to $m \in \bR$.

\subsection{Global Convergence}

The following lemma lays the foundation for the global convergence analysis of Algorithm \ref{alg:SMPG} with $\bar{\mu} = 0$, as established in Theorem \ref{thm:global}.

\begin{lemma} \label{le:infinite}
	Let $\{X_k\}$ be the sequence generated by Algorithm \ref{alg:SMPG} with $\bar{\mu} = 0$.
	Then $\bK := \{k \in \bN \mid \norm{V_k}\ff \leq \mu_k^{2}\}$ is an infinite set.
\end{lemma}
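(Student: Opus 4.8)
The plan is to argue by contradiction, converting the sufficient-decrease inequality of the line search into an infinite descent of a fixed, bounded-below merit function. So suppose, for contradiction, that $\bK$ is a finite set.

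First I would observe that when $\bar{\mu} = 0$ the early-termination test of Algorithm \ref{alg:SMPG} can never be triggered, since it would require $\mu_k \leq 0$ whereas $\mu_k \geq \theta^{k}\mu_0 > 0$ for every $k$; hence the algorithm produces an infinite sequence $\{X_k\} \subseteq \Odr$. By the update rule for the smoothing parameter, $\mu_{k+1} = \theta\mu_k$ precisely when $k \in \bK$ and $\mu_{k+1} = \mu_k$ otherwise. Therefore, if $\bK$ is finite, there exist $k_0 \in \bN$ and a constant $\hat{\mu} > 0$ with $\mu_k = \hat{\mu}$ for all $k \geq k_0$, and moreover $\norm{V_k}\ff > \mu_k^{2} = \hat{\mu}^{2}$ — equivalently $\norm{V_k}\fs > \hat{\mu}^{4} > 0$ — for all $k \geq k_0$.

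Next I would assemble two ingredients. \emph{(a)} By Lemma \ref{le:des-f} and the discussion following its proof, the Armijo backtracking \eqref{eq:ls} terminates in at most $\lceil \log_{\beta}\bar{\alpha}\rceil$ steps, uniformly in $k$, so $\alpha_k = \beta^{m_k} \geq \alpha_{\min}$ for some constant $\alpha_{\min} > 0$ independent of $k$. \emph{(b)} By Proposition \ref{prop:smoothing}(iii), $\tilde{w}(X,\mu) \geq w(X)$, hence $\tilde{f}(X,\mu) = u(X) + \tilde{w}(X,\mu) + s(X) \geq f(X)$ for all $X$ and $\mu$; since $f$ is continuous on the compact manifold $\Odr$, it is bounded below there by some $f_{\min} \in \bR$, so $\tilde{f}(X_k,\mu_k) \geq f_{\min}$ for every $k$. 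Now, for $k \geq k_0$ we have $\mu_{k+1} = \mu_k = \hat{\mu}$, so the line-search inequality \eqref{eq:ls} applied along $X_{k+1} = \retr_{X_k}(\alpha_k V_k)$ chains into
\[
\tilde{f}(X_{k+1},\hat{\mu}) \leq \tilde{f}(X_k,\hat{\mu}) - \frac{\alpha_k}{2\hat{\mu}}\norm{V_k}\fs \leq \tilde{f}(X_k,\hat{\mu}) - \frac{\alpha_{\min}\hat{\mu}^{3}}{2}.
\]
Summing from $k_0$ to $N$ gives $\tilde{f}(X_{k_0},\hat{\mu}) - f_{\min} \geq (N + 1 - k_0)\,\alpha_{\min}\hat{\mu}^{3}/2 \to +\infty$ as $N \to \infty$, which is the desired contradiction. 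Hence $\bK$ must be infinite.

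I expect the argument to be essentially mechanical; the one point deserving care — which I would flag as the crux — is the observation that finiteness of $\bK$ forces $\mu_k$ to be eventually constant. This is what allows the per-iteration decrease of $\tilde{f}(\cdot,\mu_k)$ to accumulate against a single fixed merit function $\tilde{f}(\cdot,\hat{\mu})$ rather than a moving one, and it is also what supplies the uniform lower bound $\hat{\mu}^{4}$ on $\norm{V_k}\fs$. The uniform step-size bound $\alpha_{\min}$ inherited from Lemma \ref{le:des-f} is the other load-bearing fact.
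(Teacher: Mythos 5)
Your proof is correct and follows essentially the same route as the paper: assume $\bK$ finite, note $\mu_k$ is eventually a fixed $\hat{\mu}>0$ with $\norm{V_k}\ff>\hat{\mu}^{2}$, and use the uniform stepsize lower bound from Lemma \ref{le:des-f} together with the boundedness of $\tilde{f}$ on $\Odr$ to reach a contradiction. The only cosmetic difference is that the paper concludes $\norm{V_k}\ff\to 0$ from convergence of the monotone bounded sequence $\{\tilde{f}(X_k,\hat{\mu})\}$, whereas you telescope the per-iteration decrease into an unbounded descent below $f_{\min}$ — the same descent argument phrased two ways.
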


\begin{proof}
	Suppose, on the contrary, that $\bK$ is a finite set.
	Then there exists $\bar{k} \in \bN$ such that
	\begin{equation} \label{eq:vk}
		\mu_k = \mu_{\bar{k}} > 0, 
		\mbox{~and~}
		\norm{V_k}\ff > \mu_{\bar{k}}^{2} > 0, 
	\end{equation}
	for any $k \geq \bar{k}$.
	Thus, we have $X_{k + 1} = \retr_{X_k}(\alpha_k V_k)$ for all $k \geq \bar{k}$, where the stepsize $\alpha_k$ is obtained by using the line search procedure in \eqref{eq:ls} with $\mu_k$ fixed as $\mu_{\bar{k}}$.
	From Lemma \ref{le:des-f}, we know that $\alpha_k \geq \bar{\alpha} \beta$ and
	\begin{equation*}
		\tilde{f} (X_k, \mu_{\bar{k}}) 
		- \tilde{f} (X_{k + 1}, \mu_{\bar{k}})
		\geq \dfrac{\alpha_k}{2 \mu_{\bar{k}}} \norm{V_k}\fs
		\geq \dfrac{\bar{\alpha} \beta}{2 \mu_{\bar{k}}} \norm{V_k}\fs,
	\end{equation*}
	for any $k \geq \bar{k}$.
	This observation indicates that the sequence $\{\tilde{f} (X_k, \mu_{\bar{k}})\}_{k \geq \bar{k}}$ is monotonically decreasing.
	Moreover, as a direct consequence of Proposition \ref{prop:smoothing}, we can proceed to show that
	\begin{equation*}
		f (X) \leq \tilde{f} (X, \mu) \leq f (X) + \dfrac{\mu \rho}{2},
	\end{equation*}
	for any $X \in\Odr$ and $\mu > 0$.
	In light of the continuity of $f$ over the compact manifold $\Odr$, there exist two constants $\tilde{f}_{\min}$ and $\tilde{f}_{\max}$ such that
	\begin{equation} \label{eq:bound-f}
		\tilde{f}_{\min} \leq \tilde{f} (X, \mu) \leq \tilde{f}_{\max},
	\end{equation} 
	for any $X \in\Odr$ and $\mu \in (0, \mu_0]$.
	Hence, the sequence $\{\tilde{f} (X_k, \mu_{\bar{k}})\}_{k \geq \bar{k}}$ is convergent.
	Then we can obtain that
	\begin{equation*}
		\lim_{k \to \infty} \norm{V_k}\fs
		\leq \dfrac{2 \mu_{\bar{k}}}{\bar{\alpha} \beta} \lim_{k \to \infty} \dkh{\tilde{f} (X_k, \mu_{\bar{k}}) - \tilde{f} (X_{k + 1}, \mu_{\bar{k}})}
		= 0,
	\end{equation*}
	which contradicts the second relationship in \eqref{eq:vk}.
	Consequently, we can conclude that $\bK$ is an infinite set.
	The proof is completed. 
\end{proof}

Now we are in the position to establish the global convergence of Algorithm \ref{alg:SMPG} to a stationary point of problem \eqref{opt:main} under the setting $\bar{\mu} = 0$.

\begin{theorem} \label{thm:global}
	Suppose that $\{X_k\}$ is the sequence generated by Algorithm \ref{alg:SMPG} with $\bar{\mu} = 0$.
	Then the sequence $\{X_k\}$ has at least one accumulation point.
	And any accumulation point is a stationary point of problem \eqref{opt:main}.
\end{theorem}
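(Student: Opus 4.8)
The plan is to combine the machinery already in place: since $\Odr$ is compact, $\{X_k\}$ automatically has accumulation points, so everything reduces to verifying that any such limit point $X^*$ satisfies $0 \in \grad u(X^*) + \partial\lrmR w (X^*) + \partial\lrmR s (X^*)$. First I would harvest the basic consequences of the iteration. By Lemma \ref{le:infinite} the index set $\bK = \{k \in \bN \mid \norm{V_k}\ff \le \mu_k^2\}$ is infinite, and since the update rule multiplies $\mu_k$ by $\theta \in (0,1)$ exactly at the indices in $\bK$, this forces $\mu_k \downarrow 0$. Next, $\tilde{w}(X,\cdot)$ is nondecreasing on $\bR_{++}$ (read off directly from \eqref{eq:fun-tw}), so $\tilde{f}(X_{k+1},\mu_{k+1}) \le \tilde{f}(X_{k+1},\mu_k)$; chaining this with the descent estimate of Lemma \ref{le:des-f} (which also supplies a uniform lower bound $\alpha_k \ge \bar{\alpha}\beta > 0$ on the accepted step), with $\mu_k \le \mu_0$, and with the two-sided bound \eqref{eq:bound-f}, a telescoping argument yields $\sum_k \norm{V_k}\fs < \infty$. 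Hence $\norm{V_k}\ff \to 0$ for every $k$; along $\bK$ we even have $\norm{V_k}\ff / \mu_k \le \mu_k \to 0$; and by Lemma \ref{le:retr} the displacements satisfy $\norm{X_{k+1}-X_k}\ff \le M_1 \norm{V_k}\ff \to 0$.

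The core of the argument is to pass to the limit in the optimality condition of subproblem \eqref{eq:subp}. Fix an accumulation point $X^*$ and (the reduction is discussed below) a subsequence $\{X_k\}_{k \in \bK'}$ with $\bK' \subseteq \bK$ and $X_k \to X^*$. Since $V_k \in \cT_{X_k}\Odr$ and the tangent-space projection is linear, the stationarity $0 \in \proj_{\cT_{X_k}\Odr}(\partial h_k(V_k))$ can be rewritten as
\begin{equation*}
	\grad u(X_k) + \grad \tilde{w}(X_k,\mu_k) + \frac{1}{\mu_k} V_k + \proj_{\cT_{X_k}\Odr}(\zeta_k) = 0, \qquad \zeta_k \in \partial s(X_k + V_k).
\end{equation*}
Now I would take limits term by term along a common sub-subsequence: $\grad u(X_k) \to \grad u(X^*)$ by continuity of $\nabla u$ and of the projector; $\{\grad \tilde{w}(X_k,\mu_k)\}$ is bounded by Proposition \ref{prop:smoothing}(iv), so it has a convergent sub-subsequence with limit $G$, and since $\mu_k \downarrow 0$ the definition of the Riemannian subdifferential associated with $\tilde{w}$ together with the Riemannian gradient consistency of Theorem \ref{thm:consistency} gives $G \in \partial\lrmR |_{\tilde{w}} w (X^*) \subseteq \partial\lrmR w (X^*)$; $\{\zeta_k\}$ is bounded by the Lipschitz continuity of $s$ and $X_k + V_k \to X^*$ because $V_k \to 0$, so outer semicontinuity of $\partial s$ lets us pass to a further subsequence with $\zeta_k \to \zeta \in \partial s(X^*)$, whence $\proj_{\cT_{X_k}\Odr}(\zeta_k) \to \proj_{\cT_{X^*}\Odr}(\zeta) \in \partial\lrmR s (X^*)$ via the identity $\partial\lrmR s = \proj_{\cT\Odr}(\partial s)$ recorded in Section \ref{sec:algorithm}; and finally $\mu_k^{-1} V_k \to 0$ since $\norm{V_k}\ff / \mu_k \le \mu_k \to 0$ on $\bK'$. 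Passing to the limit gives $0 \in \grad u(X^*) + \partial\lrmR w (X^*) + \partial\lrmR s (X^*)$, so $X^*$ is stationary.

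The step I expect to be the main obstacle is the one I deferred: showing that an arbitrary accumulation point of the full sequence $\{X_k\}$ is already an accumulation point of the subsequence indexed by $\bK$. This is exactly what renders the otherwise-dangerous term $\mu_k^{-1} V_k$ controllable, since we only bound $\norm{V_k}\ff$ against $\mu_k^2$ on $\bK$. The lever is $\norm{X_{k+1}-X_k}\ff \to 0$: consecutive iterates are asymptotically indistinguishable, and since $\bK$ is infinite with $\mu$ decreased exactly at its indices, the iterates between successive $\bK$-indices cannot drift away, so the two accumulation sets coincide; this is the part needing the most care. A minor companion point is that the bounded sequences $\grad\tilde{w}(X_k,\mu_k)$, $\zeta_k$ and $\proj_{\cT_{X_k}\Odr}(\zeta_k)$ must be made to converge along one common sub-subsequence before any limit is taken, which is a routine diagonal extraction. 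If one is content to state the conclusion for accumulation points of $\{X_k\}_{k\in\bK}$, the obstacle evaporates and the second paragraph is the whole proof.
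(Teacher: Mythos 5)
Your second paragraph is, in substance, the paper's own proof: the paper also passes to the limit in the optimality condition of subproblem \eqref{eq:subp} along a subsequence of $\bK$, using continuity of $\grad\, u$, boundedness of $\grad\, \tilde{w} (X_k, \mu_k)$ and of the subgradients $H_k \in \partial s (X_k + V_k)$, \cite[Theorem 24.4]{Rockafellar1970convex} to identify the limit of $H_k$, the bound $\mu_k^{-1} \norm{V_k}\ff \leq \mu_k \to 0$ on $\bK$, and the gradient consistency of Theorem \ref{thm:consistency}. Your additional telescoping argument (monotonicity of $\tilde{w} (X, \cdot)$ in $\mu$ plus \eqref{eq:bound-f}) giving $\sum_k \norm{V_k}\fs < \infty$ and hence $\norm{X_{k+1} - X_k}\ff \to 0$ is correct, but the paper does not need it: it only uses $\norm{V_k}\ff \leq \mu_k^2$ for $k \in \bK$.

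The genuine gap is exactly the step you deferred, and the lever you propose does not close it. Vanishing consecutive displacements together with $\bK$ being infinite do not force the accumulation set of $\{X_k\}$ to coincide with that of $\{X_k\}_{k \in \bK}$: the blocks between consecutive indices $\Bbbk_i$ and $\Bbbk_{i+1}$ of $\bK$ can be arbitrarily long, and the total displacement over a block is not controlled by your estimates. Indeed, within such a block every $\norm{V_k}\ff$ exceeds $\mu_{\Bbbk_{i+1}}^2$, the block length $N_i := \Bbbk_{i+1} - \Bbbk_i$ is only bounded by $O(\mu_{\Bbbk_{i+1}}^{-3})$ (this is the paper's complexity bound), and Cauchy--Schwarz bounds the drift by $M_1 \bigl( N_i \sum_k \norm{V_k}\fs \bigr)^{1/2} = O(\mu_{\Bbbk_{i+1}}^{-1})$, which blows up as $\mu \downarrow 0$; summability of $\norm{V_k}\fs$ does not give summability of $\norm{V_k}\ff$, so ``the iterates cannot drift away between visits to $\bK$'' is unsupported. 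You should know, however, that the paper does not close this gap either: its proof takes an accumulation point $X\uast$ and simply assumes ``without loss of generality'' that $\lim_{\bK \ni k \to \infty} X_k = X\uast$, i.e., it really proves stationarity for accumulation points of the subsequence $\{X_k\}_{k \in \bK}$ (which exist by compactness) --- precisely the fallback in your closing sentence. So your proposal, read with that fallback, reproduces what the paper actually establishes; the stronger claim about every accumulation point of the full sequence would require an argument that neither your sketch nor the paper supplies.
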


\begin{proof}
	For each $k \in \bK$, we have $\mu_{k + 1} = \theta \mu_k$ with $\theta \in (0, 1)$ being a decaying factor.
	According to Lemma \ref{le:infinite}, the index set $\bK$ is infinite.
	Then it can be readily verified that
	\begin{equation} \label{eq:lim-vk}
		\lim_{\bK \ni k \to \infty} \dfrac{1}{\mu_k} \norm{V_k}\ff 
		\leq \lim_{\bK \ni k \to \infty} \mu_k
		= 0.
	\end{equation}
	Since $\Odr$ is a compact manifold, the sequence $\{X_k\}$ is bounded.
	Then from the Bolzano-Weierstrass theorem, it can be deduced that the sequence $\{X_k\}$ has at least one accumulation point.
	Let $X\last$ be an accumulation point of $\{X_k\}$.
	The completeness of $\Odr$ guarantees that $X\last \in \Odr$. 
	By passing to a subsequence if necessary, we may assume without loss of generality that $\lim_{\bK \ni k \to \infty} X_k = X\last$. 
	
	Next, by virtual of the optimality condition \cite{Yang2014optimality,Chen2020proximal} of subproblem \eqref{eq:subp}, there exists $H_k \in \partial s (X_k + V_k)$ such that
	\begin{equation} \label{eq:oc-subp}
		\grad\, u (X_k)
		+ \grad\, \tilde{w} (X_k, \mu_k)
		+ \proj_{\cT_{X_k} \Odr} \dkh{H_k} 
		+ \dfrac{1}{\mu_k} V_k
		= 0,
	\end{equation}
	for any $k \in \bK$.
	It is clear that the sequence $\{\grad\, u (X_k)\}$ is convergent and
	\begin{equation*}
		\lim_{\bK \ni k \to \infty} \grad\, u (X_k) = \grad\, u (X\last).
	\end{equation*}
	According to the Lipschitz continuity of $s$, the sequence $\{H_k\}$ is bounded \cite{Clarke1990optimization}.
	Without loss of generality, we assume that it is also convergent.
	Then there exists $H\last$ such that $\lim_{\bK \ni k \to \infty} H_k = H\last$. 
	It follows from \cite[Theorem 24.4]{Rockafellar1970convex} that $H\last \in \partial s (X\last)$. 
	In addition, the boundedness of the sequence $\{\mu_k^{-1} V_k\}$ results from the fact that it is convergent in \eqref{eq:lim-vk}.
	As a result, the sequence $\{\grad\, \tilde{w} (X_k, \mu_k)\}$ is also bounded.
	Without loss of generality, we can assume that there exists $G\last$ such that
	\begin{equation*}
		\lim_{\bK \ni k \to \infty} \grad\, \tilde{w} (X_k, \mu_k) = G\last.
	\end{equation*}
	According to the definition of the Riemannian subdifferential of $w$ associated with $\tilde{w}$, it holds that $G\last \in \partial\lrmR |_{\tilde{w}} w (X\last)$.
	Then it follows from Theorem \ref{thm:consistency} that $G\last \in \partial\lrmR w (X\last)$.

	Finally, upon taking $k \to \infty$ in the relationship \eqref{eq:oc-subp}, we can conclude that
	\begin{equation*} 
		0 = \grad\, u (X\last)
		+ G\last
		+ \proj_{\cT_{X\last} \Odr} \dkh{H\last}
		\in \grad\, u (X\last)
		+ \partial\lrmR s (X\last)
		+ \partial\lrmR w (X\last),
	\end{equation*}
	which indicates that $X\last$ is a stationary point of problem \eqref{opt:main}. 
	We complete the proof. 
\end{proof}

\subsection{Iteration Complexity}

The final task is to derive the iteration complexity of Algorithm \ref{alg:SMPG}, a critical challenge that remains unresolved in existing works. 
The proof of Theorem \ref{thm:global} previously discussed leads to the insight that an accumulation point of $\{X_k\}$ is a stationary point of \eqref{opt:main} if the following conditions are satisfied,
\begin{equation*}
	\left\{
	\begin{aligned}
		& \lim_{k \to \infty} \dist \dkh{ 0, \grad\, u (X_k) + \grad\, \tilde{w} (X_k, \mu_k) + \partial\lrmR s (X_k + V_k) } = 0, \\
		& \lim_{k \to \infty} \norm{V_k}\ff = 0, \quad
		\lim_{k \to \infty} \mu_k = 0.
	\end{aligned}
	\right.
\end{equation*}
This observation motivates us to define the concept of $\epsilon$-approximate stationarity for problem \eqref{opt:main} as follows.

\begin{definition}
	A point $X \in \Odr$ is called an $\epsilon$-approximate stationary point of problem \eqref{opt:main} if there exists $V \in \cT_{X} \Odr$ with $\norm{V}\ff \leq \epsilon$ and $\mu \in [0, \epsilon]$ such that
	\begin{equation*}
		\dist \dkh{ 0, \grad\, u (X) + \grad\, \tilde{w} (X, \mu) + \proj_{\cT_{X} \Odr} \dkh{ \partial s (X + V) } } \leq \epsilon.
	\end{equation*}
\end{definition}

We show that Algorithm \ref{alg:SMPG} is capable of identifying an $\epsilon$-approximate stationary point of problem \eqref{opt:main} under the setting $\bar{\mu} = \epsilon$.

\begin{lemma} \label{le:iteration}
	For any $\epsilon \in (0, 1)$ and $\mu_0 \geq \epsilon$, Algorithm \ref{alg:SMPG} with $\bar{\mu} = \epsilon$ will terminate at an $\epsilon$-approximate stationary point of problem \eqref{opt:main}. 
\end{lemma}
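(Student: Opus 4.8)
**The plan is to show termination in finitely many iterations, and then verify that the returned point meets the $\epsilon$-approximate stationarity condition.**

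First I would argue that Algorithm \ref{alg:SMPG} with $\bar{\mu} = \epsilon$ terminates in a finite number of iterations. Suppose, for contradiction, that the algorithm runs forever. The termination test requires both $\norm{V_k}\ff \le \bar{\mu}^2$ and $\mu_k \le \bar{\mu}$, so if the algorithm never stops, then for every $k$ either $\norm{V_k}\ff > \epsilon^2$ or $\mu_k > \epsilon$. I would consider two cases depending on whether the index set $\bK := \{k \mid \norm{V_k}\ff \le \mu_k^2\}$ is finite or infinite. If $\bK$ is finite, then from some $\bar{k}$ onward $\mu_k$ is fixed at $\mu_{\bar{k}}$ and $\norm{V_k}\ff > \mu_{\bar{k}}^2$; the same telescoping/descent argument as in Lemma \ref{le:infinite} (using Lemma \ref{le:des-f} and the boundedness \eqref{eq:bound-f}) forces $\norm{V_k}\ff \to 0$, a contradiction. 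If $\bK$ is infinite, then $\mu_k \downarrow 0$, so eventually $\mu_k \le \epsilon = \bar{\mu}$; and for $k \in \bK$ large enough we also have $\norm{V_k}\ff \le \mu_k^2 \le \epsilon^2 = \bar{\mu}^2$ (since $\mu_k \to 0$ and $\norm{V_k}\ff \le \mu_k^2$). At the first such $k \in \bK$ with $\mu_k \le \bar{\mu}$, the stopping criterion is met, contradicting the assumption that the algorithm never terminates. Hence the algorithm stops at some iterate $X_K$.

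Next I would verify that the returned $X_K$ is an $\epsilon$-approximate stationary point. Upon termination we have $V := V_K \in \cT_{X_K}\Odr$ with $\norm{V}\ff \le \bar{\mu}^2 = \epsilon^2 \le \epsilon$ and $\mu := \mu_K$ with $0 \le \mu \le \bar{\mu} = \epsilon$. It remains to bound the distance quantity. By the optimality condition of subproblem \eqref{eq:subp} (as in \eqref{eq:oc-subp}), there exists $H_K \in \partial s(X_K + V_K)$ with
\begin{equation*}
	\grad\, u(X_K) + \grad\, \tilde{w}(X_K, \mu_K) + \proj_{\cT_{X_K}\Odr}(H_K) + \frac{1}{\mu_K} V_K = 0,
\end{equation*}
so that
\begin{equation*}
	\dist\dkh{0,\ \grad\, u(X_K) + \grad\, \tilde{w}(X_K,\mu_K) + \proj_{\cT_{X_K}\Odr}(\partial s(X_K+V_K))} \le \frac{1}{\mu_K}\norm{V_K}\ff.
\end{equation*}
Now $\norm{V_K}\ff \le \mu_K^2$ must hold at termination — indeed, if $\norm{V_K}\ff > \mu_K^2$ the algorithm would not have set $\mu_{K+1} = \theta\mu_K$, but more to the point, I would check that the stopping rule together with the update rule for $\mu_k$ forces $\norm{V_K}\ff \le \bar\mu^2 \le \mu_K^2$ when $\mu_K \le \bar\mu$ (using $\bar\mu \le \mu_K$... here care is needed). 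The cleanest route: since $\norm{V_K}\ff \le \bar\mu^2$ and, at the last iteration where $\mu$ was decreased, $\mu_K \ge \bar\mu$ cannot be assumed, so instead I bound $\frac{1}{\mu_K}\norm{V_K}\ff \le \frac{\bar\mu^2}{\mu_K}$ and use that $\mu_K$ only ever drops below $\bar\mu$ after the last decrease, at which point $\mu_K \ge \theta\bar\mu$... This is the delicate point.

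\textbf{The main obstacle} is precisely this last bound: controlling $\frac{1}{\mu_K}\norm{V_K}\ff$ by $\epsilon$. The stopping test gives $\norm{V_K}\ff \le \bar\mu^2 = \epsilon^2$ and $\mu_K \le \bar\mu = \epsilon$, but naively $\frac{1}{\mu_K}\norm{V_K}\ff \le \frac{\epsilon^2}{\mu_K}$ could be large when $\mu_K$ is tiny. The resolution must exploit the structure of the $\mu_k$-update: $\mu_{k+1} = \theta\mu_k$ only when $\norm{V_k}\ff \le \mu_k^2$, so at the iteration $k^\star$ of the \emph{last} shrink before termination we have $\mu_{k^\star+1} = \theta\mu_{k^\star}$ and thereafter $\mu_k$ is constant; combined with $\norm{V_k}\ff \le \mu_k^2$ at $k^\star$, I expect one obtains $\frac{1}{\mu_K}\norm{V_K}\ff \le \mu_K \le \epsilon$ directly when the stopping iteration itself satisfies $\norm{V_K}\ff \le \mu_K^2$ (then $\frac{1}{\mu_K}\norm{V_K}\ff \le \mu_K \le \bar\mu = \epsilon$), which is the generic case. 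I would carefully rule out (or handle separately) the boundary case where termination happens at an iteration with $\norm{V_K}\ff \le \bar\mu^2$ but $\norm{V_K}\ff > \mu_K^2$, showing it either cannot occur or still yields the bound $\epsilon$. Assembling these pieces gives $X_K$ as an $\epsilon$-approximate stationary point, completing the proof.
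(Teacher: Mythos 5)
Your overall architecture (finite termination via the descent/telescoping argument of Lemma \ref{le:infinite}, then the subproblem optimality condition \eqref{eq:oc-subp} to bound the stationarity residual by $\mu_K^{-1}\norm{V_K}\ff$) is the same as the paper's, but the proposal does not close the one step that actually carries the lemma: bounding $\mu_K^{-1}\norm{V_K}\ff$ by $\epsilon$. As you observe yourself, the raw stopping test only gives $\norm{V_K}\ff \le \bar{\mu}^2 = \epsilon^2$ and $\mu_K \le \epsilon$, hence only $\mu_K^{-1}\norm{V_K}\ff \le \epsilon^2/\mu_K$, which is useless when $\mu_K$ is small relative to $\epsilon$; and your fallback estimate via ``$\mu_K \ge \theta\bar{\mu}$ after the last shrink'' would at best yield $\epsilon/\theta > \epsilon$, not $\epsilon$. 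Saying you ``would rule out or handle separately'' the case $\mu_K^2 < \norm{V_K}\ff \le \epsilon^2$ is exactly the missing idea, so as written the argument does not establish the stated conclusion.

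The paper resolves this by never reasoning about a generic terminating iterate: it defines $\iota_\epsilon = \lceil \log_\theta(\epsilon/\mu_0)\rceil + 1$ and certifies stationarity at the specific index $\Bbbk_{\iota_\epsilon}$, the iteration at which the shrink condition $\norm{V_k}\ff \le \mu_k^2$ holds for the $\iota_\epsilon$-th time. There one has $\mu_{\Bbbk_{\iota_\epsilon}} = \theta^{\iota_\epsilon-1}\mu_0 \le \epsilon$ and $\norm{V_{\Bbbk_{\iota_\epsilon}}}\ff \le \mu_{\Bbbk_{\iota_\epsilon}}^2 \le \epsilon^2$, so the stopping test is met at that iterate, and---crucially---since $\norm{V_k}\ff \le \mu_k^2$ holds \emph{by construction} at this index, the residual bound becomes $\mu_k^{-1}\norm{V_k}\ff \le \mu_k \le \epsilon$ directly. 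In short, the fix is to verify $\epsilon$-approximate stationarity at a shrink iteration, where the stronger inequality $\norm{V_k}\ff \le \mu_k^2$ is available, rather than trying to extract the bound from the stopping test $\norm{V_k}\ff \le \bar{\mu}^2$ alone, which is what your argument attempts and which cannot work without an additional idea.
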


\begin{proof}
	We first define the constant
	\begin{equation*}
		\iota_\epsilon = \left\lceil \log_{\theta} \dkh{ \epsilon / \mu_0 } \right\rceil + 1.
	\end{equation*}
	Let $\Bbbk_{i}$ be the $i$-th smallest number in $\bK$. 
	Then it holds that
	\begin{equation*}
		\mu_{\Bbbk_{\iota_\epsilon}}
		= \theta^{\iota_\epsilon - 1} \mu_0 
		\leq \epsilon,
		\mbox{~and~}
		\norm{V_{\Bbbk_{\iota_\epsilon}}}\ff 
		\leq \mu_{\Bbbk_{\iota_\epsilon}}^{2}
		\leq \epsilon^{2},
	\end{equation*}
	which reveals that Algorithm \ref{alg:SMPG} will terminate at the iterate $X_{\Bbbk_{\iota_\epsilon}}$.
	
	The next step is to show that $X_{\Bbbk_{\iota_\epsilon}}$ is an $\epsilon$-approximate stationary point of problem \eqref{opt:main}. 
	According to the relationship \eqref{eq:oc-subp}, there exists $H_{\Bbbk_{\iota_\epsilon}} \in \partial s (X_{\Bbbk_{\iota_\epsilon}} + V_{\Bbbk_{\iota_\epsilon}})$ such that
	\begin{equation*}
		- \dfrac{1}{\mu_{\Bbbk_{\iota_\epsilon}}} V_{\Bbbk_{\iota_\epsilon}} 
		= \grad\, u (X_{\Bbbk_{\iota_\epsilon}}) 
		+ \grad\, \tilde{w} (X_{\Bbbk_{\iota_\epsilon}}, \mu_{\Bbbk_{\iota_\epsilon}})
		+ \proj_{\cT_{X_{\Bbbk_{\iota_\epsilon}}} \Odr} \dkh{ H_{\Bbbk_{\iota_\epsilon}} },
	\end{equation*}
	which implies that
	\begin{equation*}
		\begin{aligned}
			& \dist \dkh{ 0, \grad\, u (X_{\Bbbk_{\iota_\epsilon}}) + \grad\, \tilde{w} (X_{\Bbbk_{\iota_\epsilon}}, \mu_{\Bbbk_{\iota_\epsilon}}) + \proj_{\cT_{X_{\Bbbk_{\iota_\epsilon}}} \Odr} \dkh{ \partial s (X_{\Bbbk_{\iota_\epsilon}} + V_{\Bbbk_{\iota_\epsilon}}) } } \\
			\leq {} & \dfrac{1}{\mu_{\Bbbk_{\iota_\epsilon}}} \norm{V_{\Bbbk_{\iota_\epsilon}}}\ff
			\leq \mu_{\Bbbk_{\iota_\epsilon}}
			\leq \epsilon.
		\end{aligned}
	\end{equation*}
	Therefore, we conclude that $X_{\Bbbk_{\iota_\epsilon}}$ is an $\epsilon$-approximate stationary point of problem \eqref{opt:main}, which completes the proof. 
\end{proof}

The iteration complexity of Algorithm \ref{alg:SMPG} is established in the following theorem for finding an $\epsilon$-approximate stationary point.

\begin{theorem}
	\label{thm:complexity}
	For any $\epsilon \in (0, 1)$ and $\mu_0 \geq \epsilon$, Algorithm \ref{alg:SMPG} with $\bar{\mu} = \epsilon$ will reach an $\epsilon$-approximate stationary point of problem \eqref{opt:main} after at most $O (\epsilon^{-3})$ iterations. 
\end{theorem}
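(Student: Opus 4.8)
The plan is to track the cumulative decrease of the smoothing objective $\tilde{f}$ against its boundedness from \eqref{eq:bound-f}, and to control how slowly the smoothing parameter $\mu_k$ shrinks in terms of the number of iterations. First I would partition the iteration indices up to termination into two groups: those $k \in \bK$ for which $\|V_k\|\ff \leq \mu_k^2$ (so $\mu_{k+1} = \theta \mu_k$), and those with $\|V_k\|\ff > \mu_k^2$ (so $\mu_{k+1} = \mu_k$). By Lemma \ref{le:iteration}, termination occurs at $\Bbbk_{\iota_\epsilon}$, i.e. the $\iota_\epsilon$-th time the shrinking condition is met, where $\iota_\epsilon = \lceil \log_\theta(\epsilon/\mu_0)\rceil + 1$; in particular between consecutive shrinking events the parameter $\mu_k$ is constant, and at no point before termination does $\mu_k$ drop below $\theta \epsilon$ (otherwise the $(\iota_\epsilon)$-th shrink would have already happened, or the stopping test would trigger earlier). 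So throughout the run we have the uniform lower bound $\mu_k \geq \theta \epsilon$.

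Next I would use Lemma \ref{le:des-f} together with the line-search guarantee $\alpha_k \geq \bar\alpha\beta$ (from Lemma \ref{le:des-f}, since the backtracking accepts the first $\beta^{m_k} < \bar\alpha$), to get the per-iteration decrease
\begin{equation*}
	\tilde{f}(X_k,\mu_k) - \tilde{f}(\retr_{X_k}(\alpha_k V_k),\mu_k) \geq \frac{\bar\alpha\beta}{2\mu_k}\|V_k\|\fs .
\end{equation*}
The subtlety is that $\mu_k$ changes between steps, so I cannot directly telescope $\tilde{f}(X_k,\mu_k)$. To handle this I would observe that $\tilde{w}(X,\mu)$ is nondecreasing in $\mu$ (clear from \eqref{eq:fun-tw}), hence $\tilde{f}(X_{k+1},\mu_{k+1}) \leq \tilde{f}(X_{k+1},\mu_k)$ since $\mu_{k+1} \leq \mu_k$; combining this with the displayed decrease gives a genuine telescoping inequality
\begin{equation*}
	\tilde{f}(X_k,\mu_k) - \tilde{f}(X_{k+1},\mu_{k+1}) \geq \frac{\bar\alpha\beta}{2\mu_k}\|V_k\|\fs .
\end{equation*}
Summing over $k = 0,1,\dots,\Bbbk_{\iota_\epsilon}-1$ and using \eqref{eq:bound-f} bounds the left side by $\tilde{f}_{\max} - \tilde{f}_{\min}$, so $\sum_k \frac{1}{2\mu_k}\|V_k\|\fs \leq (\tilde{f}_{\max}-\tilde{f}_{\min})/(\bar\alpha\beta)$.

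Finally I would convert this into an iteration count. For every index $k$ with $\|V_k\|\ff > \mu_k^2$ we have $\frac{1}{2\mu_k}\|V_k\|\fs > \frac{\mu_k^3}{2} \geq \frac{(\theta\epsilon)^3}{2}$, so each such ``non-shrinking'' iteration contributes at least $(\theta\epsilon)^3/2$ to the sum; hence the number of non-shrinking iterations is at most $\frac{2(\tilde{f}_{\max}-\tilde{f}_{\min})}{\bar\alpha\beta(\theta\epsilon)^3}$. The shrinking iterations number exactly $\iota_\epsilon \leq \log_\theta(\epsilon/\mu_0) + 2$, which is $O(\log(1/\epsilon))$ and therefore dominated by the $\epsilon^{-3}$ term; a slightly more careful bookkeeping that folds the $\iota_\epsilon$ shrinking steps and the constant-$\mu$ plateaus into the same sum (using $\mu_k \geq \theta\epsilon$ uniformly and noting that on a shrinking step one still has $\|V_k\|\fs \geq 0$ contributing nonnegatively, while the final tally is $\mu_0\theta^{-3}$-weighted) yields the stated bound $\frac{2\theta^3(\tilde{f}_{\max}-\tilde{f}_{\min})}{\bar\alpha\beta(1-\theta^3)\epsilon^3}$, where the factor $(1-\theta^3)^{-1}$ arises from geometrically summing the contributions $\mu_k^3$ over the decreasing sequence of plateau values $\mu_k \in \{\mu_0,\theta\mu_0,\theta^2\mu_0,\dots\}$ down to the terminal scale $\theta^{\iota_\epsilon-1}\mu_0 \in [\theta\epsilon,\epsilon]$. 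The main obstacle I anticipate is precisely this last bookkeeping: getting the constant exactly in the form $\frac{2\theta^3}{\bar\alpha\beta(1-\theta^3)}$ requires carefully accounting for the fact that $\mu_k$ is piecewise constant and that the worst case concentrates the ``large-$\|V_k\|$'' iterations at the largest admissible values of $\mu_k$, together with summing the geometric series $\sum_{j\geq 1}(\theta^j\cdot\text{scale})^3$ rather than just bounding every $\mu_k$ below by $\theta\epsilon$; a cruder argument gives the same $O(\epsilon^{-3})$ order but not the precise constant.
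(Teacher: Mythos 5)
Your overall strategy --- the sufficient decrease from Lemma \ref{le:des-f} with $\alpha_k \ge \bar{\alpha}\beta$, counting non-shrinking iterations through $\norm{V_k}\ff > \mu_k^2$, and the uniform lower bound $\mu_k \ge \theta\epsilon$ up to the termination index from Lemma \ref{le:iteration} --- is the same engine as the paper's, and your global telescoping device is a genuine variant: the monotonicity of $\tilde{w}(X,\cdot)$ in $\mu$ (easily checked from \eqref{eq:fun-tw}) indeed gives $\tilde{f}(X_{k+1},\mu_{k+1}) \le \tilde{f}(X_{k+1},\mu_k)$, so the decrease telescopes across the whole run with a single budget $\tilde{f}_{\max}-\tilde{f}_{\min}$. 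The gap is exactly the step you flag as the main obstacle: carried out rigorously, your route bounds the number of non-shrinking iterations by $2(\tilde{f}_{\max}-\tilde{f}_{\min})/(\bar{\alpha}\beta(\theta\epsilon)^3)$ plus at most $\iota_\epsilon$ shrinking ones, i.e.\ a prefactor $\theta^{-3}$ rather than $\theta^{3}/(1-\theta^{3})$, and for small $\theta$ this is much larger than the stated bound, so Theorem \ref{thm:complexity} as written is not established. Moreover, the ``more careful bookkeeping'' you sketch cannot rescue this within your framework, and your heuristic for the $(1-\theta^{3})^{-1}$ factor is backwards: with a single global budget and per-iteration cost $\mu_k^{3}/2$, the worst case concentrates the non-shrinking iterations at the \emph{smallest} admissible plateau value $\mu \approx \theta\epsilon$ (small cost per iteration, many iterations), not the largest, and no geometric series over plateau values arises because the budget is spent only once.

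The paper's proof gets the geometric-series constant from a different decomposition that needs no monotonicity in $\mu$ at all: on each constant-$\mu$ block between consecutive shrink indices $\Bbbk_i$ and $\Bbbk_{i+1}$ it telescopes with $\mu$ fixed and invokes \eqref{eq:bound-f} \emph{afresh}, so the full budget $\tilde{f}_{\max}-\tilde{f}_{\min}$ is available on every block; combined with $\norm{V_k}\fs > \mu_{\Bbbk_{i+1}}^{4}$ on that block this yields $\Bbbk_{i+1}-\Bbbk_i \le 2(\tilde{f}_{\max}-\tilde{f}_{\min})/(\bar{\alpha}\beta\,\theta^{3i}\mu_0^{3})$, and summing $\sum_{i=0}^{\iota_\epsilon-1}\theta^{-3i}$ together with $\theta^{\iota_\epsilon-1}\mu_0 \ge \theta\epsilon$ produces the $(1-\theta^{3})^{-1}$ form of the theorem. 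If you want the constant in the statement, you must switch to this per-block accounting; your global telescoping, while clean and sufficient for the $O(\epsilon^{-3})$ order, trades away precisely the per-block reuse of the budget that generates that constant.
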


\begin{proof}
	According to Lemma \ref{le:iteration}, Algorithm \ref{alg:SMPG} with $\bar{\mu} = \epsilon$ will terminate at $X_{\Bbbk_{\iota_\epsilon}}$, which is an $\epsilon$-approximate stationary point of problem \eqref{opt:main}. 
	We give an upper bound of the iteration number $\Bbbk_{\iota_\epsilon}$.
	For convenience, we denote $\Bbbk_0 = 0$.
	The iterations from $\Bbbk_{i}$ to $\Bbbk_{i + 1}$ solve subproblem \eqref{eq:subp} with the smoothing parameter fixed as $\mu_{\Bbbk_{i + 1}}$ for $0 \leq i \leq \iota_\epsilon - 1$.
	According to Lemma \ref{le:des-f}, we have
	\begin{equation*}
		\tilde{f} (X_k, \mu_{\Bbbk_{i + 1}}) 
		- \tilde{f} (X_{k + 1}, \mu_{\Bbbk_{i + 1}})
		\geq \dfrac{\alpha_k}{2 \mu_{\Bbbk_{i + 1}}} \norm{V_k}\fs
		\geq \dfrac{\bar{\alpha} \beta}{2 \mu_{\Bbbk_{i + 1}}} \norm{V_k}\fs,
	\end{equation*}
	for $\Bbbk_{i} \leq k \leq \Bbbk_{i + 1} - 1$.
	Then it can be readily verified that
	\begin{equation*}
		\begin{aligned}
			\sum_{k = \Bbbk_{i}}^{\Bbbk_{i + 1} - 1} \norm{V_k}\fs
			\leq {} & \dfrac{2 \mu_{\Bbbk_{i + 1}}}{\bar{\alpha} \beta} \sum_{k = \Bbbk_{i}}^{\Bbbk_{i + 1} - 1} \dkh{\tilde{f} (X_k, \mu_{\Bbbk_{i + 1}}) - \tilde{f} (X_{k + 1}, \mu_{\Bbbk_{i + 1}})} \\
			= {} & \dfrac{2 \mu_{\Bbbk_{i + 1}}}{\bar{\alpha} \beta} \dkh{\tilde{f} (X_{\Bbbk_{i}}, \mu_{\Bbbk_{i + 1}}) - \tilde{f} (X_{\Bbbk_{i + 1}}, \mu_{\Bbbk_{i + 1}})} \\
			\leq {} & \dfrac{2 \mu_{\Bbbk_{i + 1}}}{\bar{\alpha} \beta} \dkh{\tilde{f}_{\max} - \tilde{f}_{\min}},
		\end{aligned}
	\end{equation*}
	where the last inequality follows from \eqref{eq:bound-f}. 
	From the definition of $\Bbbk_{i + 1}$, we know that $\norm{V_k}\ff > \mu_{\Bbbk_{i + 1}}^{2}$ for $\Bbbk_{i} \leq k \leq \Bbbk_{i + 1} - 1$. 
	Hence, it holds that
	\begin{equation*}
		\sum_{k = \Bbbk_{i}}^{\Bbbk_{i + 1} - 1} \norm{V_k}\fs
		\geq \dkh{\Bbbk_{i + 1} - \Bbbk_{i}} \mu_{\Bbbk_{i + 1}}^{4},
	\end{equation*}
	which together with the relationship $\mu_{\Bbbk_{i + 1}} = \theta^i \mu_0$ implies that
	\begin{equation*}
		\Bbbk_{i + 1} - \Bbbk_{i}
		\leq \dfrac{2 (\tilde{f}_{\max} - \tilde{f}_{\min})}{\bar{\alpha} \beta \mu_{\Bbbk_{i + 1}}^{3}}
		= \dfrac{2 (\tilde{f}_{\max} - \tilde{f}_{\min})}{\bar{\alpha} \beta \mu_0^{3} \theta^{3 i}}.
	\end{equation*}
	Finally, a straightforward verification reveals that
	\begin{equation*}
		\Bbbk_{\iota_\epsilon}
		= \Bbbk_0 + \sum_{i = 0}^{\iota_\epsilon - 1} \dkh{\Bbbk_{i + 1} - \Bbbk_{i}}
		\leq \dfrac{2 (\tilde{f}_{\max} - \tilde{f}_{\min})}{\bar{\alpha} \beta \mu_0^{3}} \sum_{i = 0}^{\iota_\epsilon - 1} \dfrac{1}{\theta^{3 i}}
		\leq \dfrac{2 \theta^{3} (\tilde{f}_{\max} - \tilde{f}_{\min})}{\bar{\alpha} \beta (1 - \theta^{3}) \epsilon^{3}}.
	\end{equation*}
	The proof is completed. 
\end{proof}

Theorem \ref{thm:complexity} demonstrates that SMPG achieves an iteration complexity of $O (\epsilon^{-3})$. 
By contrast, the Riemannian subgradient method \cite{Li2021weakly}, which is capable of handling problem \eqref{opt:main}, suffers from an inferior iteration complexity of $O (\epsilon^{-4})$.

\section{Numerical Experiments}

\label{sec:experiment}

Preliminary numerical results are presented in this section to provide additional insights into the performance guarantees of model \eqref{opt:drspca-was} and Algorithm \ref{alg:SMPG} (SMPG). 
All codes are implemented in MATLAB R2018b on a workstation with dual Intel Xeon Gold 6242R CPU processors (at $3.10$ GHz$\times 20 \times 2$) and $510$ GB of RAM under Ubuntu 20.04.

\subsection{Experimental Setting}

In the following experiments, we estimate an empirical distribution \cite{Esfahani2018data} to serve as the nominal distribution $\dP\lcirc$ in problem \eqref{opt:drspca-was}. 
Although the true distribution $\dP\last$ remains inherently elusive, it is often partially observable through a finite collection of $n \in \bN$ independent samples \cite{Esfahani2018data,Lu2012augmented}, such as past realizations of the random vector $\xi$. 
Let the training dataset comprising these samples be denoted as $\hat{\Xi}_n := \{\hat{\xi}_i\}_{i = 1}^n \subseteq \Rd$. 
Then we can construct the empirical distribution as follows,
\begin{equation*}
	\hat{\dP}_n := \dfrac{1}{n} \sum_{i = 1}^{n} \eth_{\hat{\xi}_i},
\end{equation*}
where $\eth_{\hat{\xi}_i}$ represents the Dirac distribution concentrating unit mass at $\hat{\xi}_i \in \Rd$. 
In fact, the empirical distribution $\hat{\dP}_n$ can be interpreted as the uniform distribution over the finite samples in $\hat{\Xi}_n$.

Based on the preceding constructions, the sample average approximation (SAA) model of PCA can be expressed as
\begin{equation}
	\label{opt:spca-saa}
	\min_{X \in \Odr}
	\dE_{\hat{\dP}_n} \fkh{ \norm{ \dkh{I_d - X X\zz} \dkh{\xi -  \dE_{\hat{\dP}_n} \fkh{\xi} } }_2^2} + s (X).
\end{equation}
The above formulation has been extensively investigated in the literature \cite{Lu2012augmented,Wang2024decentralized,Wang2023communication,Wang2024seeking}, which does not account for uncertainty in the underlying distribution. 
In the subsequent experiments, we will conduct a performance comparison between the equivalent reformulation \eqref{opt:was-eq-r} of the DRO model \eqref{opt:drspca-was} and the SAA model \eqref{opt:spca-saa}.

In addition, we focus on the $\ell_1$-norm regularizer with a parameter $\gamma > 0$ to control the amount of sparseness, namely,
\begin{equation*}
	s (X) = \gamma \norm{X}_1,
\end{equation*}
where the $\ell_1$-norm of $X$ is given by $\norm{X}_1 := \sum_{i, j} \abs{X_{i, j}}$ with $X_{i, j}$ being the $(i, j)$-th entry of $X$. 
Our empirical experiments reveal that the choice of regularizers does not affect the numerical results dramatically.

\subsection{Performance of SMPG}

The first experiment is designed to demonstrate the effectiveness and efficiency of SMPG for solving problem \eqref{opt:was-eq-r} in comparison with the Riemannian subgradient method (RSM) proposed in \cite{Li2021weakly}. 
Specifically, we construct the true distribution $\dP\last$ based on the normal distribution $\dN (0, \Sigma\last)$. 
The true covariance matrix $\Sigma\last \in \bS_{+}^d$ is obtained by projecting a randomly generated matrix in $\Rdd$ onto $\bS_{+}^d$. 
Subsequently, we produce $n$ samples independently and identically from $\dN (0, \Sigma\last)$ to generate the empirical distribution $\hat{\dP}_n$.

For our testing, we fix $n = 50$, $r = 50$, $\gamma = 0.05$, and $\rho = 1$ in problem \eqref{opt:was-eq-r}. 
The algorithmic parameters of SMPG are set to $\mu_0 = 0.1$, $\theta = 0.5$, $\bar{\mu} = 0$, and $\beta = 0.5$. 
And RSM is equipped with the diminishing stepsize $5 / \sqrt{k}$ for each iteration $k$. 
Moreover, we construct the initial point based on the leading $r$ eigenvectors of the empirical covariance matrix. 
The fixed-point method proposed in \cite{Liu2024penalty} is employed to solve the subproblem \eqref{eq:subp}, and the retraction operator is realized by the polar decomposition. 
Finally, we terminate SMPG and RSM after $1000$ and $3000$ iterations, respectively.

Figure \ref{fig:smpg} comprises two subplots that depict CPU times and final function values obtained by the two algorithms for the problem dimension $d$ varying across $\{1000, 1500, 2000, 2500, 3000\}$. 
It can be observed that the proposed SMPG algorithm consistently yields solutions of higher quality, as evidenced by its lower function values. 
Furthermore, with the exception of the case $d =1000$, SMPG outperforms RSM in terms of computational efficiency, requiring significantly less CPU times. 
Notably, the performance advantage of SMPG becomes increasingly pronounced as the problem dimension grows.

\begin{figure}[t]
	\centering
	\subfigure[Function Value]{
		\label{subfig:smpg_fval}
		\includegraphics[width=0.3\linewidth]{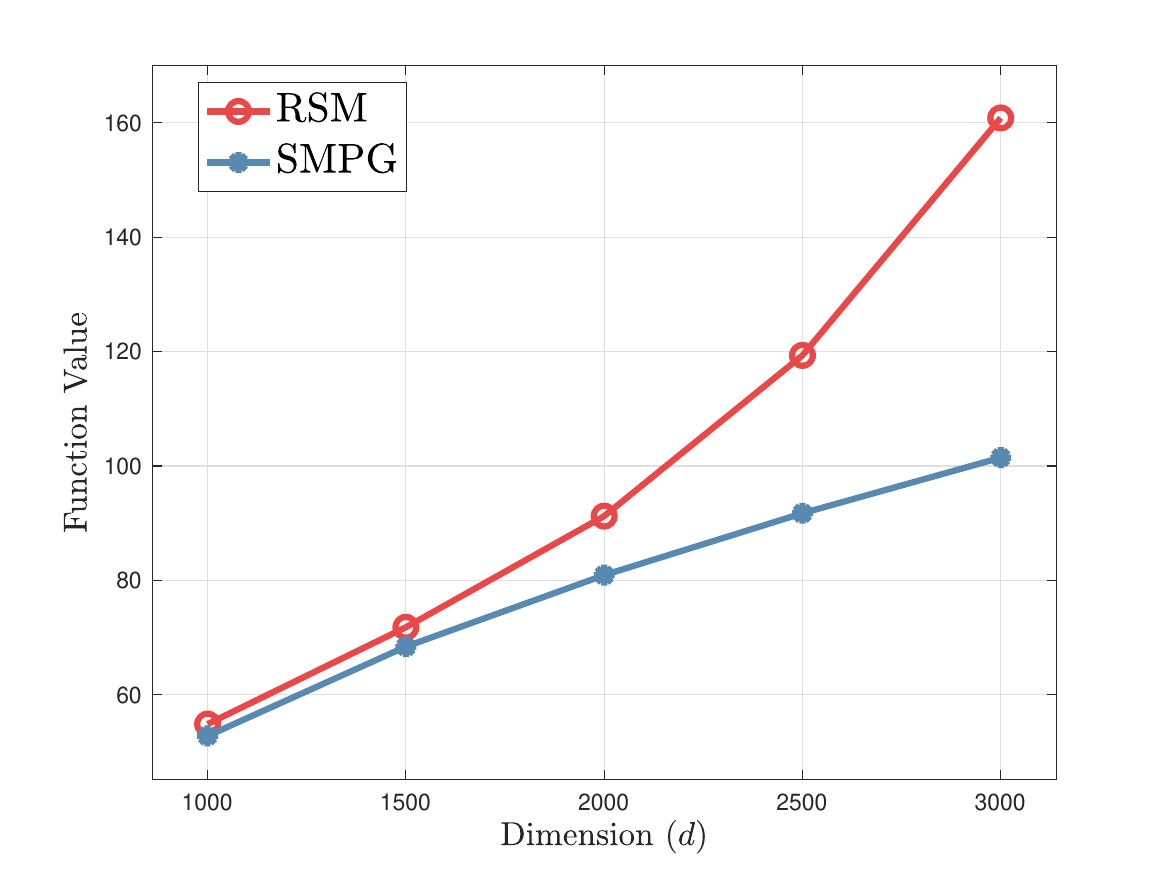}
	}
	\subfigure[CPU Time]{
		\label{subfig:smpg_time}
		\includegraphics[width=0.3\linewidth]{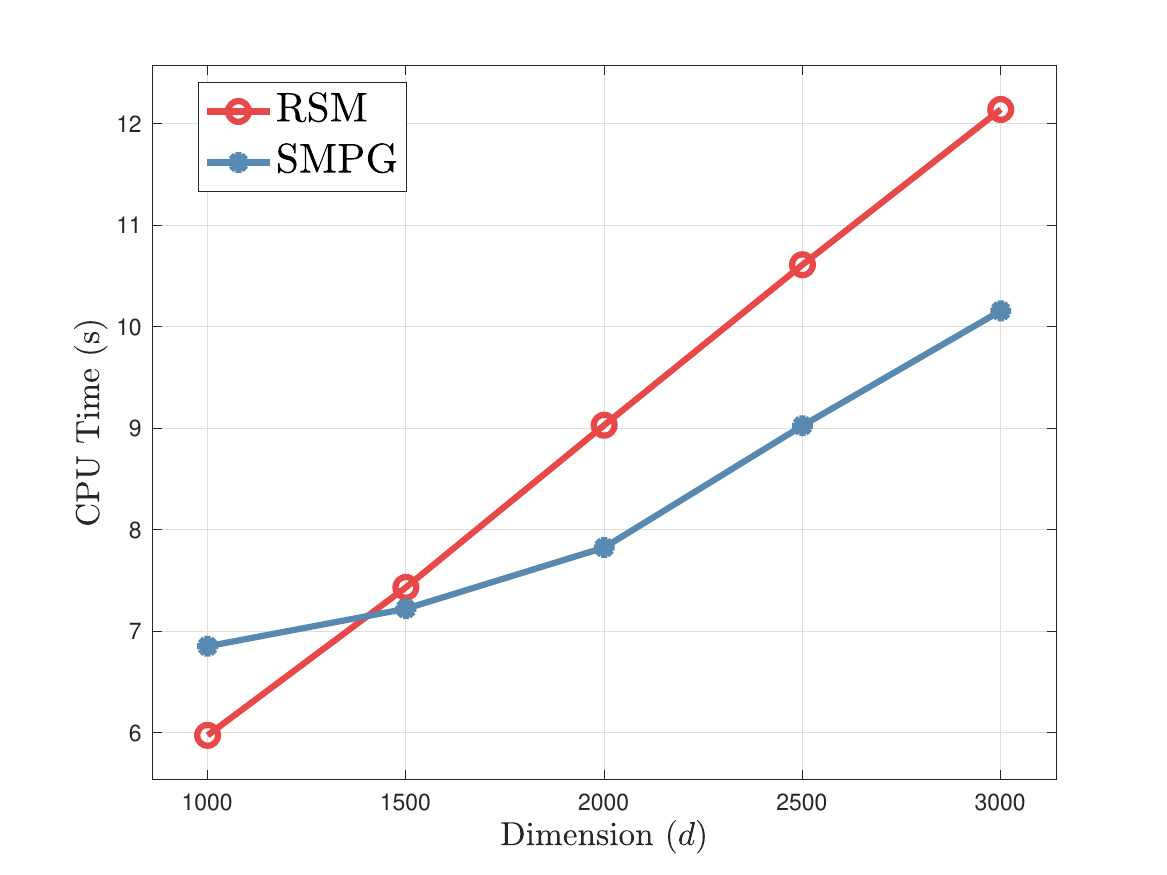}
	}
	\caption{Numerical comparison between SMPG and RSM for different problem dimensions.}
	\label{fig:smpg}
\end{figure}

\subsection{Performance of DRO Model}

In the next experiment, we aim to illustrate the rationality and necessity of adopting the DRO model for PCA. 
For convenience, the DRO model \eqref{opt:was-eq-r} and the SAA model \eqref{opt:spca-saa} are denoted by DRPCA and PCA-SAA, respectively.
The performances of DRPCA and PCA-SAA are evaluated on three real-world datasets, including MNIST\footnote{\url{https://yann.lecun.com/exdb/mnist/}}, CIFAR-10\footnote{\url{https://www.cs.toronto.edu/~kriz/cifar.html}}, and WHO-Mortality\footnote{\url{https://www.who.int/data/gho/data/themes/mortality-and-global-health-estimates}}. 
Specifically, MNIST contains $60000$ samples, each with $d = 784$ features; CIFAR-10 consists of $50000$ samples with $d = 3072$ features per sample; and WHO-Mortality includes $6080$ samples, each characterized by $d = 24$ features.

For each dataset, we extract the first $n$ samples to construct the empirical distribution $\hat{\dP}_n$. 
Then SMPG and ManPG \cite{Chen2020proximal} are deployed to solve the DRPCA model \eqref{opt:was-eq-r} and the PCA-SAA model \eqref{opt:spca-saa}, respectively. 
For our simulation in this case, we fix $r = 5$ and $\gamma = 0.02$ in both problems \eqref{opt:was-eq-r} and \eqref{opt:spca-saa}.

The performance of DRPCA and PCA-SAA is first evaluated under the worst-case scenario. 
In this test, we set the radius $\rho$ to $0.5$ in problem \eqref{opt:was-eq-r}. 
And the worst-case performance of solutions is represented by the objective function value of problem \eqref{opt:was-eq-r}. 
The corresponding numerical results are presented in Figure \ref{fig:drspca_worst} for varying sample sizes $n \in \{100, 200, 300, 400, 500\}$. 
Next, we assess the quality of solutions based on the following out-of-sample performance \cite{Esfahani2018data},
\begin{equation*}
	f\last (X) 
	= \dE_{\dP\last} \fkh{ \norm{ \dkh{I_d - X X\zz} \dkh{\xi -  \dE_{\dP\last} \fkh{\xi} } }_2^2} + s (X),
\end{equation*}
which is the objective function value of PCA with the true distribution $\dP\last$ being the empirical distribution generated by all samples in the dataset. 
In addition, the radius $\rho$ in problem \eqref{opt:was-eq-r} is set to $5 n^{-1/2}$ for each sample size $n$. 
Figure \ref{fig:drspca_true} visualizes the out-of-sample performances of DRPCA and PCA-SAA on two datasets, evaluated across sample sizes $n \in \{100, 200, 300, 400, 500\}$. 
It can be observed from Figure \ref{fig:drspca_worst} and Figure \ref{fig:drspca_true} that the solutions of DRPCA consistently demonstrate superior performances compared to those of PCA-SAA across all tested cases. 
These numerical results highlight the rationality and necessity of adopting the DRO model for PCA.

\begin{figure}[t]
	\centering
	\subfigure[MNIST]{
		\label{subfig:mnist_worst}
		\includegraphics[width=0.3\linewidth]{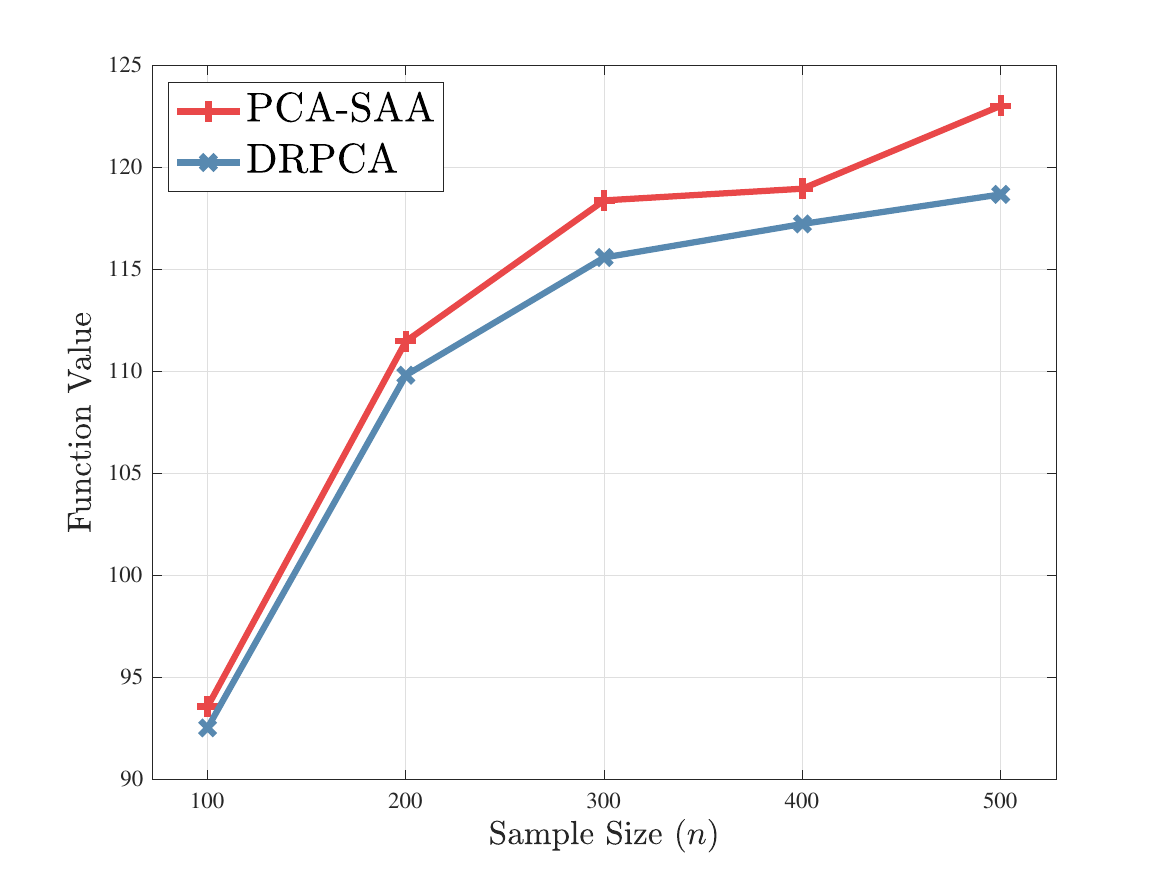}
	}
	\subfigure[CIFAR-10]{
		\label{subfig:cifar_worst}
		\includegraphics[width=0.3\linewidth]{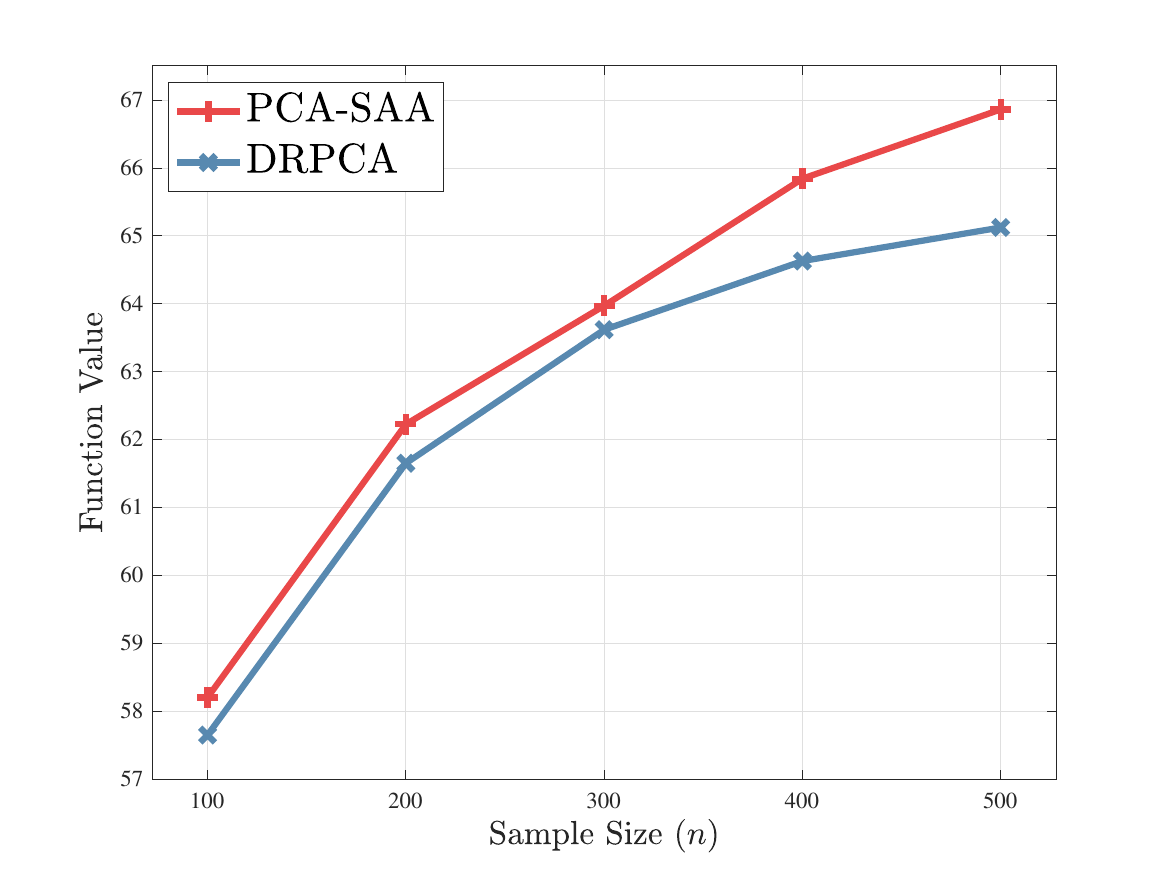}
	}
	\subfigure[WHO-Mortality]{
		\label{subfig:who_worst}
		\includegraphics[width=0.3\linewidth]{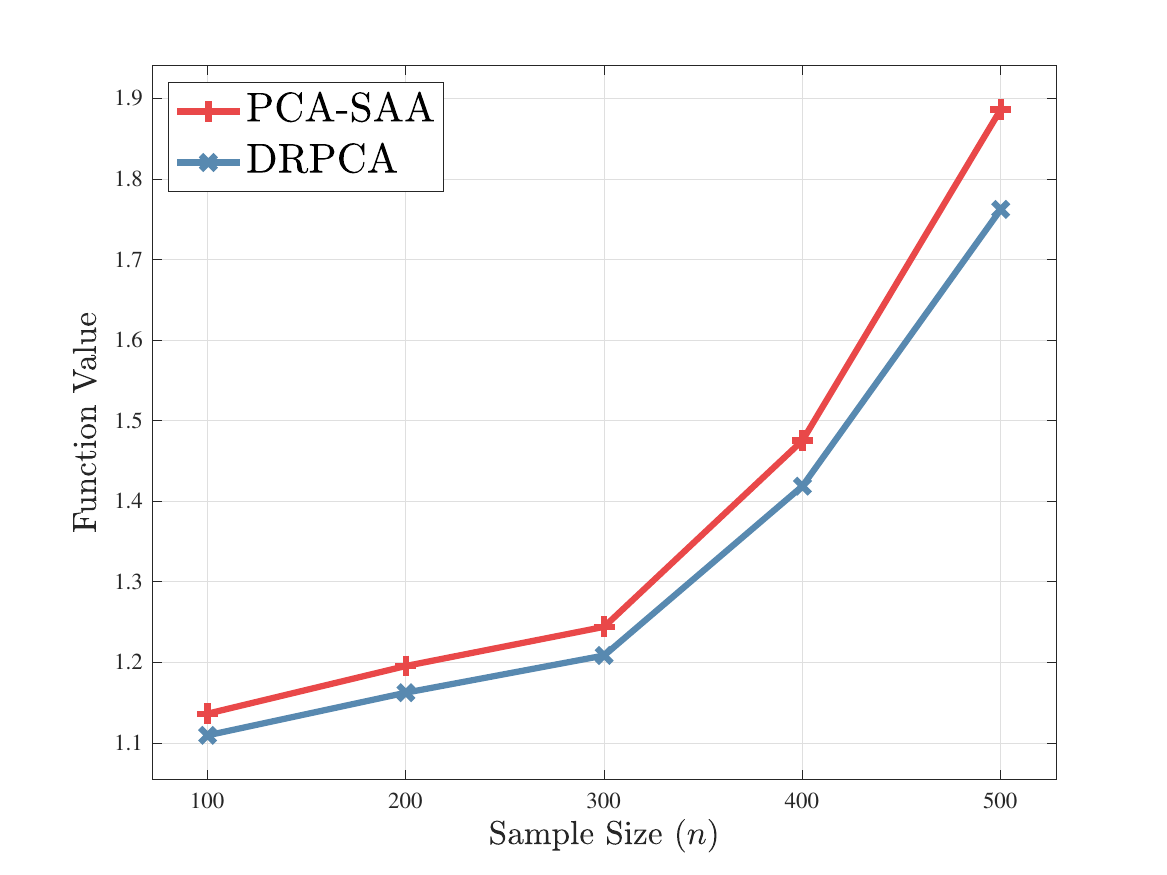}
	}
	\caption{Numerical comparison of the worst-case performance between DRPCA and PCA-SAA on three datasets.}
	\label{fig:drspca_worst}
\end{figure}

\begin{figure}[t]
	\centering
	\subfigure[MNIST]{
		\label{subfig:mnist_true}
		\includegraphics[width=0.3\linewidth]{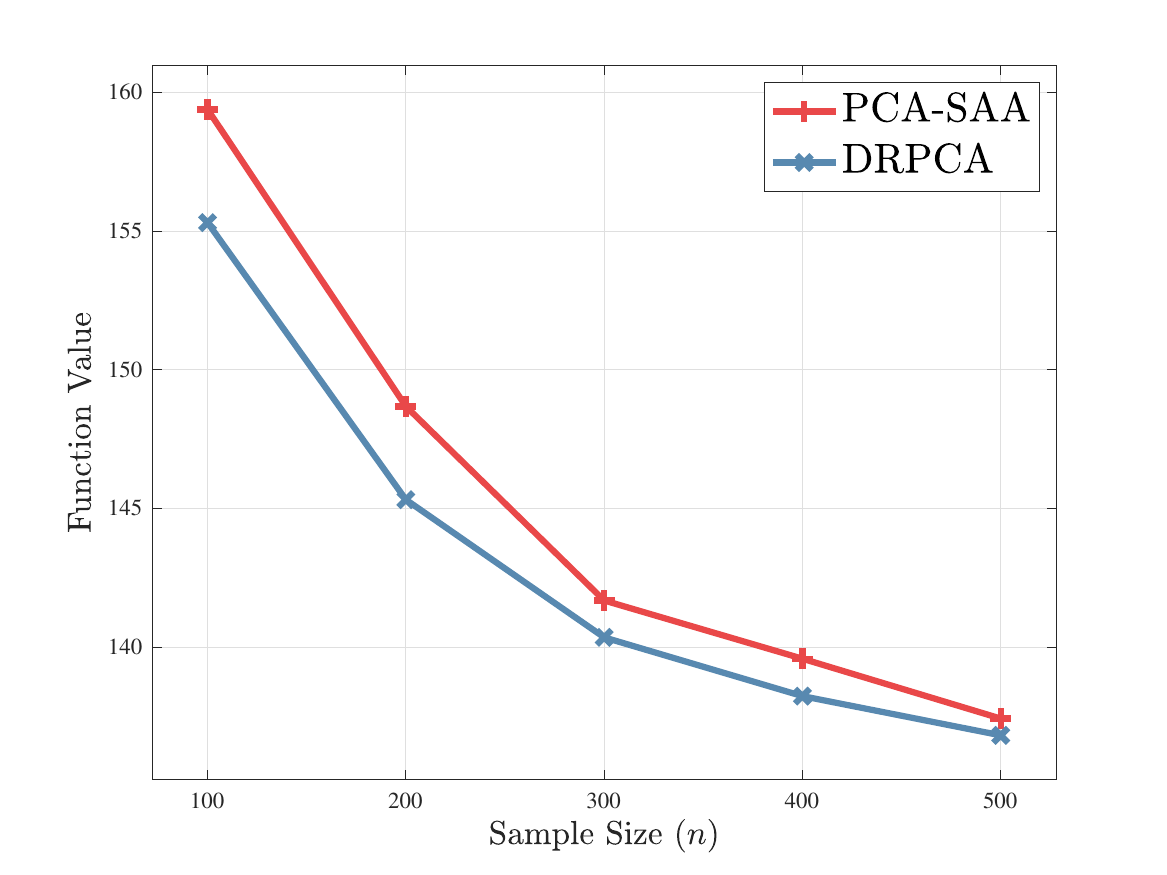}
	}
	\subfigure[CIFAR-10]{
		\label{subfig:cifar_true}
		\includegraphics[width=0.3\linewidth]{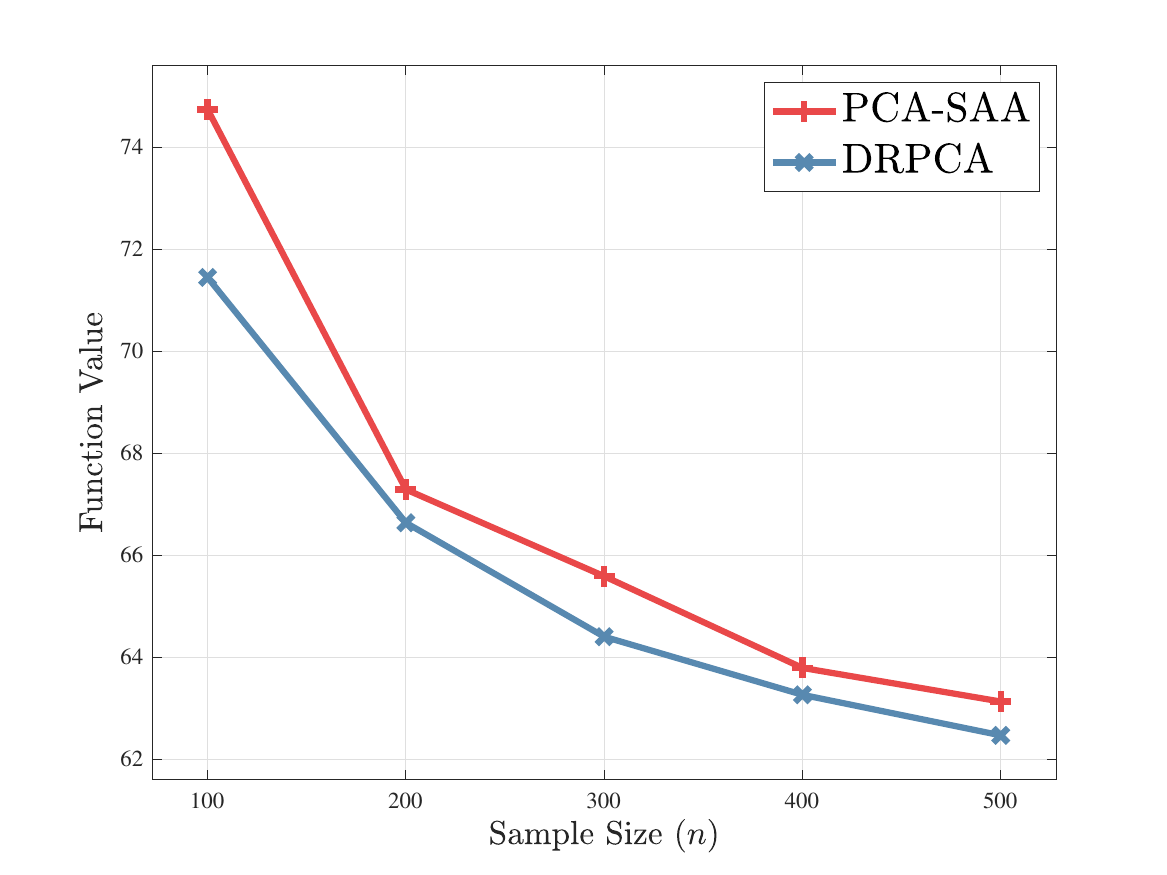}
	}
	\subfigure[WHO-Mortality]{
		\label{subfig:who_true}
		\includegraphics[width=0.3\linewidth]{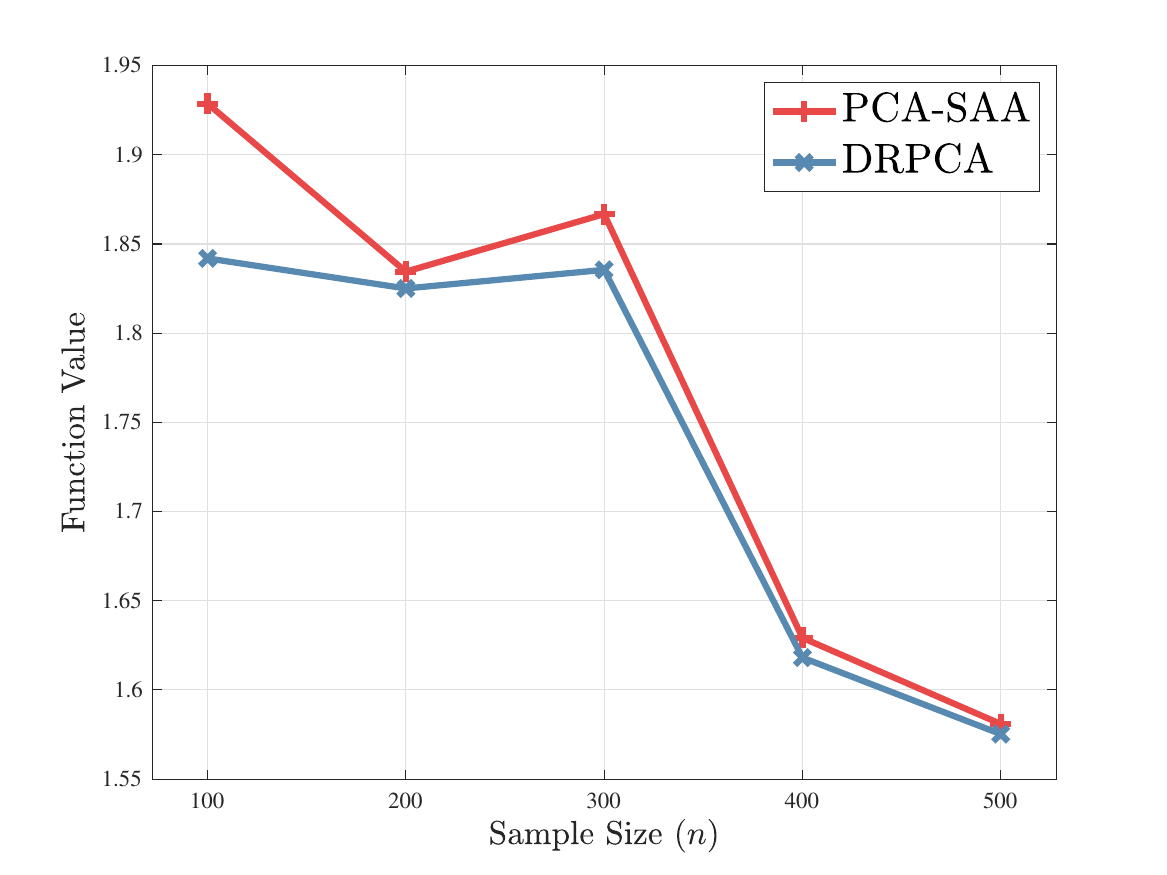}
	}
	\caption{Numerical comparison of the out-of-sample performance between DRPCA and PCA-SAA on three datasets.}
	\label{fig:drspca_true}
\end{figure}

\section{Concluding Remarks} 

\label{sec:conclusion}

The DRO model \eqref{opt:drspca-was} of PCA constitutes a nonsmooth constrained min-max optimization problem on a Riemannian manifold. 
When the ambiguity set is characterized by the type-$2$ Wasserstein distance, we equivalently reformulate it as a minimization problem \eqref{opt:was-eq-r} by providing a closed-form expression for the optimal value of the inner maximization problem in \eqref{opt:drspca-was}. 
However, problem \eqref{opt:was-eq-r} can hardly be solved efficiently by existing Riemannian optimization algorithms due to the involvement of two nonsmooth terms in the objective function. 
To surmount this issue, we develop an efficient algorithm SMPG for problem \eqref{opt:was-eq-r}, which incorporates the smoothing approximation technique into the proximal gradient method on Riemannian manifolds. 

We rigorously demonstrate that SMPG achieves the global convergence to a stationary point and further provide an iteration complexity. 
Preliminary numerical results are presented to validate the efficiency of SMPG and the effectiveness of our DRO model, illuminating their potential in addressing the challenges inherent in PCA under distributional uncertainty.

\section*{Acknowledgments}

We sincerely express our gratitude to Wei Bian, Hailin Sun, Nachuan Xiao, and Zaikun Zhang for their insightful discussions on smoothing algorithms, distributionally robust optimization, and manifold optimization.


\bibliographystyle{abbrv}
\bibliography{library}

\addcontentsline{toc}{section}{References}

\end{document}